\newtheorem{theorem}{Theorem}[section]
\newtheorem{lemma}[theorem]{Lemma}
\newtheorem{corollary}[theorem]{Corollary}
\newtheorem{proposition}[theorem]{Proposition}
\newtheorem{remark}[theorem]{Remark}
\theoremstyle{definition}
\newtheorem{defn}[theorem]{Definition}
\newtheorem{example}[theorem]{Example}
\newtheorem{problem}[theorem]{Problem}
\newcommand\Q{\mathbb Q}
\newcommand\qi{$\mathbb{Q}$I}
\DeclareMathOperator{\PSL}{{\mathrm{PSL}}}
\DeclareMathOperator{\PGammaL}{{\mathrm{P\Gamma L}}}
\DeclareMathOperator{\PSU}{{\mathrm{PSU}}}
\DeclareMathOperator{\AGL}{{\mathrm{AGL}}}
\DeclareMathOperator{\Sym}{{\Sym}}
\DeclareMathOperator{\tr}{\mathrm{Tr}}
\DeclareMathOperator{\SL}{{\mathrm{SL}}}
\DeclareMathOperator{\Symm}{\mathrm{Sym}}
\newcommand{\allones}{\mathds{1}}
\renewcommand{\le}{\leqslant}
\renewcommand{\ge}{\geqslant}
\title{The synchronisation hierarchy via coherent configurations}
\author[Bamberg]{John Bamberg}
\author[Lansdown]{Jesse Lansdown}
\address[Bamberg]{Centre for the Mathematics of Symmetry and Computation, Department of Mathematics and Statistics, The University of Western Australia, Perth, WA 6009, Australia.}
\address[Lansdown]{School of Mathematics and Statistics, University of Canterbury, Christchurch, New Zealand.}
\email[Bamberg]{john.bamberg@uwa.edu.au}
\email[Lansdown]{jesse.lansdown@canterbury.ac.nz}
\keywords{coherent configuration; spreading permutation group} 
\subjclass[2020]{20B15, 05E16, 05E30}
\begin{document}

\begin{abstract}
We describe the spreading property for finite transitive permutation groups in terms of 
properties of their associated coherent configurations, in much the same way that separating and
synchronising groups can be described via properties of their orbital graphs. We also show how the other properties in the synchronisation hierarchy naturally fit inside this framework.
This combinatorial description allows for more efficient computational tools, and we deduce that every spreading permutation group of degree at most $8191$ is a \qi-group. We also consider design-orthogonality more generally for noncommutative homogeneous coherent configurations.
\end{abstract}

\maketitle

\section{Introduction}

One way to measure the strength of a group action is via the interaction of its orbits on substructures: the greater the symmetry, the harder it is to find a structural property preserved by the group. 
This is precisely the motivation behind the \emph{synchronisation hierarchy} (see \cite{AraujoCameronSteinberg2017}) of permutation groups, where transitive groups are graded into \emph{primitive}, \emph{synchronising}, \emph{separating}, \emph{spreading}, and \emph{\qi} categories.
For instance, the classical groups of Lie type act transitively on singular $k$-spaces (for each $k$) of the finite polar spaces. Let $A$ be just one maximal singular subspace and let $B$ be
an \emph{ovoid} (a set of points such that each maximal singular subspace intersects
it in $1$ point). Then  $|A\cap B^g|=1$, for all $g\in G$, where $G$ is the automorphism group of the polar space,
and the strict intersection condition implies that $G$ is \emph{nonspreading} and \emph{nonseparating}.
Thus the existence of the substructures $A$ and $B$ imply that $G$ lies lower in the synchronisation hierarchy, and
this occurs frequently in settings involving a large group of symmetries. 
The condition that $|A\cap B^g|$ is constant (over $g\in G$)
manifests itself in the theory of association schemes. Delsarte \cite{Delsarte1977} showed
that if $G$ acts \emph{generously transitively} on a finite set $\Omega$, and if $A,B\subseteq \Omega$, then 
$|A\cap B^g|$ is constant (over $g\in G$) if and only if $A$ and $B$ are \emph{design-orthogonal}.
This is a surprising result that relates the constant intersection property to an orthogonal
decomposition of the permutation module for $G$. 
Throughout this paper, we will develop these connections in relation to the synchronisation hierarchy of permutation groups, culminating in Theorem \ref{hierarchy}.

Orbital (di)graphs of transitive permutation groups are a central theme of this paper. 
D.~Higman \cite{Higman1967} showed that a transitive group $G$ is primitive if and only if all of its orbital graphs are connected. These insights lead Higman \cite{Higman1970} to consider the study of all orbital (di)graphs \emph{at once}, as a combinatorial object known as a \emph{coherent configuration}. The properties of orbital graphs also
appear in the theory of \emph{synchronising} and \emph{separating} groups. Cameron and Kazanidis \cite{CameronKazanidis} showed that synchronisation and separation of groups can be framed in terms of the existence of a $G$-invariant graph satisfying certain chromatic, clique, and independence number properties. Unfortunately, the spreading property cannot be expressed in graph-theoretic terms since it involves multisets. We show that design-orthogonality is a suitable framework for considering the layers of the synchronisation hierarchy, since it also accounts for the properties of spreading and \qi.

One of the major open problems in the theory of synchronising permutation groups is 
whether there exist groups that are spreading but not \qi\ \cite[Problem 12.2]{AraujoCameronSteinberg2017}.
Every \qi-group is spreading and $3/2$-transitive, and the \qi-groups have been classified \cite{BGLPS}.
According to the celebrated O'Nan-Scott Theorem, a finite synchronising permutation group is \emph{affine}, \emph{almost simple}, or of \emph{diagonal} type. For the smaller class of spreading groups, the affine examples are \qi-groups, and have been classified \cite{Dixon2005}. In a recent result, no spreading group is diagonal \cite{withSaul}. This leaves the almost simple spreading groups, and the question of whether there are any besides the 
\qi\ examples (namely, those with socle $\PSL(2,q)$ with $q-1$ a Mersenne prime, acting on $\tfrac{1}{2}q(q-1)$ elements). Within the design-orthogonality framework, this feels more plausible. Non-\qi\ implies the existence of at least 2 nontrivial $G$-submodules of the permutation module of $G$ -- the basic ingredients for design-orthogonality. We show that this conjecture holds for all primitive groups up to degree 8191 by computing explicit witnesses. We are able to do so by leveraging design-orthogonality to express witnesses as solutions of constraint satisfaction or linear programming problems. This illustrates a further advantage of the design-orthogonality framework for the synchronisation hierarchy: although not limited to finding nonspreading witnesses, it shines in this setting since there are few other methods known.

Even though our main application has been to describe the synchronisation hierarchy via design-orthogonality,
the tools we develop will be useful to more general combinatorial problems. 
For a \textbf{commutative} coherent configuration, our notion of design-orthogonality completely agrees with Delsarte's, and moreover if the coherent configuration is Schurian it completely agrees with the irreducible decomposition of the permutation module. For a noncommutative homogeneous coherent configuration, our notion of design-orthogonality simulates Delsarte's as closely as possible, and if the coherent configuration is additionally Schurian, it agrees with the isotypic decomposition of the permutation module.

\section{Homogeneous coherent configurations and transitive permutation groups}

We will follow \cite{Brouwer_unpublished}, \cite{Higman75}, and \cite{Hobart}.
A \emph{homogenous coherent configuration} on a set $\Omega$ is a set of nonempty binary relations $\mathcal{R}=\{R_0,\ldots, R_d\}$ such that
\begin{enumerate}[(i)]
\item $R_0$ is the identity relation;
\item $\mathcal{R}$ forms a partition of $\Omega\times \Omega$;
\item For each $R_i$, its converse relation $R_i^\top=\{(x,y):(y,x)\in R_i\}$ lies in $\mathcal{R}$;
\item For $i,j,k$ and $(x,y)\in R_k$, the number of $z\in\Omega$ such that $(x,z)\in R_i$ and $(y,z)\in R_j$ is a constant $p_{ij}^k$ that does not depend on the choice of $x,y$.
\end{enumerate}
For example, if $G$ is a group acting transitively on a set $\Omega$, then the orbitals of $G$ (i.e.,
the orbits of $G$ on ordered pairs of $\Omega$) form
a \emph{Schurian} homogeneous coherent configuration $\mathcal{K}(G)$. 
If the $R_i$ are symmetric (that is, $R_i=R_i^\top$ for all $i$),
then we say that the coherent configuration is \emph{symmetric}. If $p_{ij}^k=p_{ji}^k$ for all $i,j,k$, then
the coherent configuration is \emph{commutative}. Symmetric implies commutative, and we will use the term `association scheme' for commutative coherent confugrations.

For each $R_i$, let $A_i$ be its adjacency matrix, and let $n:=|\Omega|$. Consider the adjacency $\mathbb{C}$-algebra $\mathcal{A}=\langle A_i\rangle$. Since $\mathcal{A}$ contains the identity and is closed under the conjugate-transpose map $*$, $\mathcal{A}$ is semisimple, and so by the Molien-Artin-Wedderburn Theorem, it can be written as a direct sum of matrix algebras:
\[\mathcal{A}=\bigoplus_{t=0}^m \mathcal{A}_t\]
where $\mathcal{A}_t\cong M_{d_t}(\mathbb{C})$, the algebra of $d_t\times d_t$ matrices over $\mathbb{C}$, and $m\leqslant d$. Moreover, the sum of the squares of the degrees of the factors is equal to $\dim \mathcal{A}$.
For each $t$, we let $\Delta_t$ be an irreducible representation $\mathcal{A}\to M_{d_t}(\mathbb{C})$,
and we can choose them in such a way that they commute with the $*$-operation. That is, $\Delta_t(A^*)=\Delta_t(A)^*$, for all $A\in\mathcal{A}_t$.

Each $M_{d_t}(\mathbb{C})$ has a basis consisting of the \emph{unit matrices}; these are the $d_t\times d_t$ matrices $e_{ij}^t$ that are all zero except for a single 1 in the $(i,j)$-position. If $d_i>1$, then the $e_{ij}^t$ are not
necessarily orthogonal to one another.
In fact, they satisfy $e_{ij}^te_{k\ell}^t=\delta_{jk}e_{i\ell}^t$ where $\delta$ is the Kronecker-delta function.
In the commutative case, the $d_t$ are each equal to 1, and so we obtain a complete set of orthogonal idempotents for $\mathcal{A}$. 

By taking the preimages of the unit matrices under the $\Delta_t$ (see \cite[Section 3]{Higman75}), we find a basis $\{E_0,\ldots, E_d\}$ and we can relate it to the natural basis in the following way: 
\begin{enumerate}
\item There are constants $P_{ji}$ such that $A_i=\sum_{j=0}^d P_{ji}E_j$;
\item There are constants $Q_{ij}$ such that $E_j=\frac{1}{n}\sum_{i=0}^d Q_{ij}A_i$.
\end{enumerate}
Note that we have retained the notation that is customary to association schemes, but we caution that the $E_j$ are not always orthogonal (i.e., we do not always have products $E_iE_j=O$ for $i\ne j$, where $O$ is the zero matrix).
We also use the convention that $A_0$ is the identity matrix and $E_0=\frac{1}{n}J$ where $J$ is the `all-ones' matrix.
Now $\langle A,B\rangle:=\tr(AB^*)$ is the Frobenius inner product on $M_{n}(\mathbb{C})$ and it turns out (see Lemma \ref{lem:mutuallyorthogonal}), that the $\{A_i\}$ and $\{E_s\}$ are each bases consisting of mutually orthogonal matrices, with respect to this inner product.

\begin{lemma}\label{lem:mutuallyorthogonal}
Let the $E_{s}$ be defined as above (as preimages of the $e_{ij}^t$). Then the $E_{s}$ are mutually orthogonal
with respect to the inner product $\langle A,B\rangle:=\tr(AB^*)$.
\end{lemma}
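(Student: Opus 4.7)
The plan is to reduce the inner product $\langle E_s,E_{s'}\rangle=\tr(E_sE_{s'}^{*})$ to the multiplication table of the matrix units $e_{ij}^{t}$ via the isomorphisms $\Delta_{t}$, and then evaluate the trace using the fact that $\mathcal{A}$ acts on $\mathbb{C}^{n}$ as a $\ast$-algebra, so each irreducible summand $\mathcal{A}_{t}\cong M_{d_{t}}(\mathbb{C})$ occurs with some multiplicity $m_{t}$ in the standard module.

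Write $E_{s}=\Delta_{t}^{-1}(e_{ij}^{t})\in\mathcal{A}_{t}$ and $E_{s'}=\Delta_{t'}^{-1}(e_{k\ell}^{t'})\in\mathcal{A}_{t'}$. Because the $\Delta_{t}$ commute with $\ast$ and $(e_{k\ell}^{t'})^{*}=e_{\ell k}^{t'}$, we have $E_{s'}^{*}=\Delta_{t'}^{-1}(e_{\ell k}^{t'})\in\mathcal{A}_{t'}$. If $t\neq t'$, then $E_{s}E_{s'}^{*}=0$ because $\mathcal{A}_{t}\mathcal{A}_{t'}=0$ in the Wedderburn direct sum, and the trace vanishes. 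So assume $t=t'$. Applying $\Delta_{t}^{-1}$ to the matrix unit identity $e_{ij}^{t}e_{\ell k}^{t}=\delta_{j\ell}\,e_{ik}^{t}$ gives $E_{s}E_{s'}^{*}=\delta_{j\ell}\,\Delta_{t}^{-1}(e_{ik}^{t})$, which is already $0$ unless $j=\ell$.

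It remains to compute $\tr\bigl(\Delta_{t}^{-1}(e_{ik}^{t})\bigr)$ as a trace of an $n\times n$ matrix, and to see that it equals $m_{t}\delta_{ik}$, where $m_{t}$ is the multiplicity with which the simple $\mathcal{A}_{t}$-module occurs in $\mathbb{C}^{n}$. This follows because the standard $\mathcal{A}$-module decomposes as $\bigoplus_{t} m_{t}V_{t}$ with $V_{t}$ of dimension $d_{t}$, and on each copy of $V_{t}$ the element $\Delta_{t}^{-1}(e_{ik}^{t})$ acts as $e_{ik}^{t}$, whose $d_{t}\times d_{t}$ trace is $\delta_{ik}$. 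Consequently $\langle E_{s},E_{s'}\rangle=\delta_{tt'}\,\delta_{ik}\,\delta_{j\ell}\,m_{t}$, which is zero unless $s=s'$.

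The main conceptual step -- and the only non-bookkeeping point -- is the last one: converting the ambient $n\times n$ Frobenius trace into a trace on the irreducible block $M_{d_{t}}(\mathbb{C})$. Once one identifies the standard representation of the semisimple $\ast$-algebra $\mathcal{A}$ with an isotypic sum, the equality $\tr_{M_{n}(\mathbb{C})}(A)=m_{t}\tr_{M_{d_{t}}(\mathbb{C})}(\Delta_{t}(A))$ for $A\in\mathcal{A}_{t}$ is routine, but it is what genuinely makes the nondiagonal unit matrices ($i\neq k$) vanish rather than merely the different-block pairs. Everything else is the matrix-unit multiplication rule combined with $\ast$-equivariance of $\Delta_{t}$.
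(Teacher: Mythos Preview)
Your argument is correct and follows the same initial steps as the paper: both use $\ast$-equivariance of $\Delta_t$ to get $E_sE_{s'}^{*}=\delta_{tt'}\delta_{j\ell}\,\Delta_t^{-1}(e_{ik}^{t})$. The divergence is in how the trace of $\Delta_t^{-1}(e_{ik}^{t})$ is handled when $i\neq k$. You invoke the isotypic decomposition $\mathbb{C}^{n}\cong\bigoplus_t m_tV_t$ to obtain the trace identity $\tr_{M_n(\mathbb{C})}(A)=m_t\,\tr_{M_{d_t}(\mathbb{C})}(\Delta_t(A))$ for $A\in\mathcal{A}_t$, whence $\tr(\Delta_t^{-1}(e_{ik}^{t}))=m_t\delta_{ik}$. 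The paper instead observes that $e_{ik}^{t}$ is nilpotent for $i\neq k$, so its preimage $E_{ik}$ under the algebra isomorphism $\Delta_t$ is a nilpotent $n\times n$ matrix and therefore has trace zero---no module decomposition needed. The paper's route is shorter and avoids any discussion of multiplicities; your route is a little heavier but has the bonus of actually computing $\langle E_s,E_s\rangle=m_t$, which is exactly the constant the paper later denotes $m_j$ and uses in Lemma~\ref{like28}.
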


\begin{proof}
First, fix $t\in\{1,\ldots,d\}$ so that we only consider $e_{ij}:=e_{ij}^t$ in one of the Artin-Wedderburn components.
Consider distinct elements $E_{ij}$ and $E_{k\ell}$ -- the $\Delta_t$ preimages of $e_{ij}$ and $e_{k\ell}$ respectively. Then
\[
\Delta_t(E_{ij}E_{k\ell}^*)=e_{ij}e_{k\ell}^*=e_{ij} e_{\ell k}=\delta_{j\ell} e_{ik}=\delta_{j\ell}\Delta_t(E_{ik}).
\]
Therefore, $E_{ij}E_{k\ell}^*=\delta_{j,\ell}E_{ik}$.
If $j\ne \ell$, then $\tr(E_{ij}E_{k\ell}^*)=\tr(O)=0$. So suppose $j=\ell$.
Then, $i\ne k$ because our two elements are distinct. Hence
$\tr(E_{ik})=0$ because $E_{ik}$ is nilpotent. So in both cases,
$\langle E_{ij}, E_{k\ell}\rangle=0$.
\end{proof}

So, for every matrix $M\in \mathcal{A}$, we have two equal expressions for the projection of $M$, yielding:
\begin{equation}
\sum_{i=0}^d \frac{\langle M,A_i\rangle}{\langle A_i,A_i\rangle}A_i=\sum_{i=0}^d \frac{\langle M,E_j\rangle}{\langle E_j,E_j\rangle}E_j.\label{eq:projection}
\end{equation}
This leads to the following important lemma, which is the coherent configuration analogue of \cite[2.8]{Roos}.
Define $k_i:=\langle A_i,A_i\rangle$ and $m_j:=n\langle E_j,E_j\rangle$; this choice of notation conforms
to what is customarily used in the theory of association schemes.
In the following, vectors will be thought of as row vectors. Also, recall that $A_i^*=A_i^\top$, since $A_i$ is a real matrix.

\begin{lemma}\label{like28}
Let $x,y\in\mathbb{C}^{n}$. Then
\begin{equation}
\sum_{i=0}^d\frac{1}{k_i}(xA_{i}^\top y^*)A_i=n\sum_{j=0}^d\frac{1}{m_j}(xE_{j}^*y^*)E_j.
\label{like28_eq}
\end{equation}
\end{lemma}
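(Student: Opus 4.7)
The strategy is to apply the projection identity \eqref{eq:projection} to a carefully chosen rank-one matrix $M$. Concretely, I would take $M := y^*x$, an $n\times n$ matrix, viewing $x$ as a row vector and $y^*$ as a column vector. One preliminary point must be addressed: the paper presents \eqref{eq:projection} only for $M \in \mathcal{A}$, but $y^*x$ typically does not lie in $\mathcal{A}$. I would first observe that Lemma \ref{lem:mutuallyorthogonal}, together with the analogous orthogonality of the $A_i$'s, exhibits $\{A_i\}$ and $\{E_j\}$ as two orthogonal bases of $\mathcal{A}$ for the Frobenius inner product. Hence both sides of \eqref{eq:projection} equal the orthogonal projection of $M$ onto $\mathcal{A}$, so the identity extends to arbitrary $M \in M_n(\mathbb{C})$.

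Next, I would compute the two inner products appearing in \eqref{eq:projection} for $M = y^*x$. Using cyclicity of the trace and $A_i^* = A_i^\top$ (since $A_i$ is real),
\[
\langle y^*x, A_i\rangle = \tr(y^*x A_i^\top) = \tr(x A_i^\top y^*) = x A_i^\top y^*,
\]
where the final $1\times 1$ matrix is identified with its scalar entry. The analogous computation gives $\langle y^*x, E_j\rangle = x E_j^* y^*$.

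Substituting these into the extended version of \eqref{eq:projection}, and using $\langle A_i,A_i\rangle = k_i$ and $\langle E_j,E_j\rangle = m_j/n$, delivers \eqref{like28_eq} at once; the factor of $n$ on the right-hand side appears precisely from the normalisation $m_j = n\langle E_j, E_j\rangle$. The main obstacle is the one technicality flagged above, namely the extension of \eqref{eq:projection} beyond $\mathcal{A}$. This is minor conceptually, but should be made explicit because the chosen $M = y^*x$ is almost never in the adjacency algebra.
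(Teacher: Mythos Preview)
Your argument is correct and matches the paper's proof almost verbatim: both take $M=y^*x$, compute $\langle M,A_i\rangle=xA_i^\top y^*$ and $\langle M,E_j\rangle=xE_j^*y^*$ via the trace, and invoke \eqref{eq:projection}. You are in fact more scrupulous than the paper, which states \eqref{eq:projection} only for $M\in\mathcal{A}$ and then silently applies it to the rank-one matrix $y^*x$; your remark that both sides of \eqref{eq:projection} are the orthogonal projection onto $\mathcal{A}$ (hence agree for arbitrary $M$) fills that small gap.
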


\begin{proof}
    Take $M$ to be the rank 1 matrix $y^*x$. First note that the trace of a matrix of the form $u^*v$
    is just the scalar product $vu^*$. So for each $i\in\{0,\ldots, d\}$,
\begin{align*}
\langle M,A_i\rangle&=\tr( (y^*x)A_i^\top )\\
&=\tr( y^*(xA_i^\top) )\\
&=xA_{i}^\top y^*
\end{align*}
and similarly, $\langle M,E_j\rangle=xE_{j}^*y^*$ for each $j$. So by \eqref{eq:projection},
the result follows.
\end{proof}

\begin{lemma}\label{inbetweenstep}
Let $x,y\in\mathbb{C}^{n}$. Then
\begin{equation}
\sum_{i=0}^d\frac{1}{k_i}(xA_{i}^\top x^*)(yA_iy^*)=n\sum_{j=0}^d\frac{1}{m_j}(xE_{j}^*x^*)(yE_jy^*).
\label{inbetween_eq}
\end{equation}
\end{lemma}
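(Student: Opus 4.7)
The plan is to derive this lemma directly from Lemma \ref{like28} by a two-step substitution-then-sandwich argument. Lemma \ref{like28} is a matrix identity in two free vector arguments $x,y$, whereas Lemma \ref{inbetweenstep} is a scalar identity which is symmetric in $x$ and separately involves a quadratic form in $y$. The idea is to specialise the vector arguments of Lemma \ref{like28} to make the scalar coefficients match those appearing here, and then extract a scalar from the remaining matrix identity by sandwiching between a fresh vector.

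First, I would apply Lemma \ref{like28} with $y$ replaced by $x$. This yields the matrix identity
\begin{equation*}
\sum_{i=0}^d\frac{1}{k_i}(xA_{i}^\top x^*)\,A_i \;=\; n\sum_{j=0}^d\frac{1}{m_j}(xE_{j}^*x^*)\,E_j,
\end{equation*}
whose scalar coefficients are already exactly those we want on each side of \eqref{inbetween_eq}. Next, I would take the (new) vector $y\in\mathbb{C}^n$ and form $y(\,\cdot\,)y^*$ of both sides of this identity. Since $y(\,\cdot\,)y^*$ is a linear functional on $M_n(\mathbb{C})$, it commutes with the finite sums and with multiplication by the scalars $(xA_i^\top x^*)/k_i$ and $(xE_j^*x^*)/m_j$, producing precisely
\begin{equation*}
\sum_{i=0}^d\frac{1}{k_i}(xA_{i}^\top x^*)(yA_iy^*) \;=\; n\sum_{j=0}^d\frac{1}{m_j}(xE_{j}^*x^*)(yE_jy^*),
\end{equation*}
which is the desired identity.

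There is essentially no obstacle: the only thing that needs to be checked is that Lemma \ref{like28} was stated for arbitrary $x,y\in\mathbb{C}^n$, so the specialisation $y\mapsto x$ is legitimate, and that applying the linear functional $M\mapsto yMy^*$ is valid on both sides — both of which are immediate. The lemma is therefore a direct scalar consequence of the matrix-valued identity of Lemma \ref{like28}, and the proof is a short two-line argument.
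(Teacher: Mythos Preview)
Your proposal is correct and matches the paper's proof essentially line for line: the paper also specialises Lemma \ref{like28} with both vector arguments equal to $x$, calls the resulting matrix $M$, and then computes $yMy^*$ to obtain the scalar identity.
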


\begin{proof}
By Lemma \ref{like28}, we have 
\[ 
M:=\sum_{i=0}^d\frac{1}{k_i}(xA_{i}^\top x^*)A_i=n\sum_{j=0}^d\frac{1}{m_j}(xE_{j}^*x^*)E_j.
\]
Therefore, computing $yMy^*$ gives us
\[ 
\sum_{i=0}^d\frac{1}{k_i}(xA_{i}^\top x^*)(yA_iy^*)=n\sum_{j=0}^d\frac{1}{m_j}(xE_{j}^*x^*)(yE_jy^*).\qedhere
\]
\end{proof}

We will use the notation $\chi_S$ for the characteristic vector of a subset $S$ of elements of a set $\Omega$,
where the context is clear that $\Omega$ is the domain of $\chi_S$, and there is some predetermined ordering of 
the elements of $\Omega$. Similarly, if $S$ is a multiset, $\chi_S$
yields (at each coordinate) the multiplicity of each element of $S$. We say that a multiset $S$ is \emph{nontrivial}
if $S$ contains at least two elements with different nonzero multiplicities, at most $|\Omega|-2$ of which are $0$. For sets $A$ and $B$, the ordinary inner product
$\chi_A\cdot \chi_B$ is just the cardinality of the intersection of $A$ and $B$. For multisets, it gives the cardinality of the \emph{generalised intersection} of the two multisets. If $G$ is a group acting on a finite set $\Omega$, then
$G$ also has an action on the free vector space $\mathbb{C}\Omega$, whereby there is a predetermined ordering of $\Omega$,
and $G$ acts coordinate-wise. If the context is clear, and if $v$ is a vector of length $|\Omega|$, then $v^g$
will denote the image of $v$ under the coordinate-wise action of $g$ on $v$. We will use $\allones$ to denote the `all-ones' vector.

\begin{lemma}\label{averageorbit}
Let $G$ a be transitive permutation group acting on $\Omega$, and let $v\in\mathbb{C}^{|\Omega|}$.
Then $\frac{1}{|G|}\sum_{g\in G}v^g=\frac{(v\cdot \allones)}{|\Omega|} \allones$.
\end{lemma}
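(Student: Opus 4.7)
The plan is to show that the vector $w := \tfrac{1}{|G|}\sum_{g\in G} v^g$ is $G$-invariant, use transitivity to conclude that $w$ is a scalar multiple of $\allones$, and then pin down the scalar by pairing with $\allones$. This is the cleanest route because it exploits exactly the two features we have at hand: averaging over a group produces invariants, and transitivity on coordinates forces invariant vectors to be constant.

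First I would observe that for any $h \in G$, a reindexing of the summation gives $w^h = \tfrac{1}{|G|}\sum_{g\in G} v^{gh} = \tfrac{1}{|G|}\sum_{g'\in G} v^{g'} = w$. So $w$ is fixed by the coordinate-wise action of $G$. Since $G$ acts transitively on $\Omega$, any two coordinates of $w$ are equal (if $\omega_1 = \omega_2^{h}$ then the $\omega_1$-entry of $w$ equals the $\omega_2$-entry of $w^{h^{-1}} = w$), hence $w = c \allones$ for some $c \in \mathbb{C}$.

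To determine $c$, I would take the ordinary inner product with $\allones$. On one hand, $w \cdot \allones = c |\Omega|$. On the other hand, the action of each $g \in G$ is a permutation of coordinates, so it preserves the sum of entries: $v^g \cdot \allones = v \cdot \allones$ for every $g$. Therefore
\[
w \cdot \allones \;=\; \frac{1}{|G|}\sum_{g\in G} (v^g \cdot \allones) \;=\; v\cdot \allones.
\]
Comparing the two expressions yields $c = (v\cdot\allones)/|\Omega|$, as required.

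There is no real obstacle here; the only point requiring a touch of care is the bookkeeping for the coordinate-wise action, i.e. verifying that $v^{gh} = (v^g)^h$ under the chosen convention so that the reindexing step in the first paragraph is legitimate, and that permuting coordinates preserves the sum of entries. Both are immediate from the definition of the action on $\mathbb{C}\Omega$ described just before the lemma statement.
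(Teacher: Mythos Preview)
Your proof is correct. It is, however, organised differently from the paper's argument. The paper computes each coordinate of the sum directly: for a fixed $i$, the multiset $\{(v^g)_i : g\in G\}$ runs over every entry of $v$ exactly $|G|/|\Omega|$ times (the size of a point stabiliser), so the $i$-th coordinate of $\sum_{g\in G} v^g$ equals $(v\cdot\allones)\,|G|/|\Omega|$, independently of $i$. You instead first argue abstractly that the average is $G$-invariant and hence constant, and only then identify the constant via the pairing with $\allones$. Your route is slightly more conceptual and separates the two logical steps cleanly; the paper's route is a one-step counting argument that makes the orbit--stabiliser bookkeeping explicit. Either way the lemma is immediate.
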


\begin{proof}
Consider the $i$-th coordinate $v_i$ of $v$. Since $G$ is transitive, the multiset of values
$(v^g)_i$, where $g$ ranges over $G$, consists of each value of $v$ replicated $|G|/|\Omega|$ times
(the size of a point stabiliser). Therefore, the sum of the $(v^g)_i$ is
$(v\cdot \allones)\frac{|G|}{|\Omega|}$. Since this is independent of $i$, it follows that 
$\frac{1}{|G|}\sum_{g\in G}v^g$ is the constant vector $\frac{(v\cdot \allones)}{|\Omega|} \allones$.
\end{proof}

\section{An example of a nonstratifiable coherent configuration}\label{sec:example}

Let $\mathcal{C}$ be a coherent configuration on a set $\Omega$. 
The \emph{symmetrisation} of $\mathcal{C}$ is the partition of $\Omega^2$ obtained by taking each part of $\mathcal{C}$ and adjoining it with
its opposite relation to make a partition complete with symmetric relations. If this partition is also a coherent configuration, then we say that $\mathcal{C}$ is \emph{stratifiable}. In the case that $\mathcal{C}$ arises
from the orbitals of a transitive permutation group $G$, we have that $\mathcal{C}$ is stratifiable if and only if
the permutation character of $G$ is multiplicity-free over the real numbers \cite[p.42]{Bailey}.
Below is a summary of the properties
of the group $G$ and its Schurian coherent configuration $\mathcal{K}(G)$. Note that $G$ is \emph{generously transitive}
if and only if each of its orbitals is symmetric.

\begin{center}
\begin{tabular}{cc}
\toprule 
$G$ & $\mathcal{K}(G)$\\
\midrule 
generously transitive & symmetric\\
$\mathbb{C}$-multiplicity free & commutative\\
$\mathbb{R}$-multiplicity free & stratifiable\\
\bottomrule
\end{tabular}
\end{center}

\begin{example}\label{ex:agl5}
Consider the Frobenius group $G=\AGL(1,5)$ acting on unordered pairs of distinct elements of $\mathbb{F}_5$. Then $G$ acts transitively, but not generously transitively, since the orbitals of $G$ are precisely the diagonal orbital $\mathcal{O}_0$, and the following five orbitals:
\begin{align*}
\mathcal{O}_1:=&(\{0,1\},\{0,2\})^G,&\mathcal{O}_2:=&(\{0,2\},\{0,1\})^G,\\
\mathcal{O}_3:=&(\{0,1\},\{0,4\})^G=(\{0,4\},\{0,1\})^G,&\mathcal{O}_4:=&(\{0,1\},\{2,3\})^G=(\{2,3\},\{0,1\})^G,\\
\mathcal{O}_5:=&(\{0,1\},\{2,4\})^G=(\{2,4\},\{0,1\})^G.
\end{align*}
The orbitals give rise to a homogeneous coherent configuration $\mathcal{C}$ on 10 vertices, with valencies $1,2,2,2,2,1$.
This is the smallest example 
(in terms of the number of vertices) of a Schurian homogeneous coherent configuration that is nonstratifiable. 
Let $\mathcal{A}$ be the adjacency algebra of $\mathcal{C}$, over $\mathbb{C}$. The Molien-Artin-Wedderburn
decomposition is then
\[
\mathcal{A}=\mathcal{A}_0\oplus \mathcal{A}_1\oplus \mathcal{A}_2
\]
where $\dim \mathcal{A}_0=\mathcal{A}_1=1$ and $\dim\mathcal{A}_2=4$. Without loss of generality, $\mathcal{A}_0$ will 
have corresponding idempotent the rank 1 matrix $\frac{1}{10}J$. The idempotent for $\mathcal{A}_1$
is the following rank 1 idempotent matrix:
\[
E_1=\frac{1}{10}(I-A_1-A_2+A_3+A_4-A_5).
\]
Now $\mathcal{A}_2$ has the following basis in terms of the adjacency matrices:
\[
\left\{ 4I - A_3-A_4, \quad 3A_1-2A_2-2A_5,\quad A_2-2A_5,\quad  A_3-A_4\right\}.
\]
The following yields an irreducible representation $\Delta:\mathcal{A}_2\to M_2(\mathbb{C})$ where the images are $2\times 2$ matrices, and such that $\Delta$ commutes with the $*$-map:
\begin{align*}
 \frac{1}{10}(5I+5A_5-J)
&\longrightarrow \begin{bmatrix}
1 & 0 \\
 0 & 0 \\
 \end{bmatrix} & & &
 \frac{\sqrt{5}}{10}(A_1-A_2+A_3-A_4)
&\longrightarrow
\begin{bmatrix}
 0 & 1 \\
0 & 0 \\
\end{bmatrix}\\
\frac{\sqrt{5}}{10}(-A_1+A_2+A_3-A_4)
&\longrightarrow
\begin{bmatrix}
 0 & 0\\
 1 & 0 \\
\end{bmatrix} & & &
\frac{1}{10}(5I+2A_1+2A_2-3A_5-J)
&\longrightarrow
\begin{bmatrix}
 0 & 0 \\
 0 & 1 \\
\end{bmatrix}
\end{align*}
Moreover $E_2=\Delta^{-1}(e_{11})$, $E_3=\Delta^{-1}(e_{12})$, $E_4=\Delta^{-1}(e_{21})$, and $E_5=\Delta^{-1}(e_{22})$ are clear from this description.

Now $E_2$ and $E_5$ are idempotent, but $E_3$ and $E_4$ are nilpotent.
Each matrix has rank 4, and $\mathrm{Im}(E_2)=\mathrm{Im}(E_4)$ and $\mathrm{Im}(E_3)=\mathrm{Im}(E_5)$.
Moreover, they are orthogonal and so the free vector space $\mathbb{C}\Omega$
decomposes as follows:
\[
\mathbb{C}\Omega=\langle \allones\rangle \oplus  \mathrm{Im}(E_1) \oplus \mathrm{Im}(E_2)\oplus \mathrm{Im}(E_3)
\]
with dimensions 1, 1, 4, 4 respectively.
\end{example}

Note that there are infinitely many choices of irreducible representations, each giving rise to different bases of idempotent and nilpotent matrices. For example, let 
$\tilde{\Delta}$ be the irreducible representation given by

\begin{align*}
\frac{1}{15}\left(  5I - 5A_3  - 5A_5 + J\right)
&\longrightarrow \begin{bmatrix}
1 & 0 \\
 0 & 0 \\
 \end{bmatrix} & & &
 \frac{1}{3 \sqrt{5}}\left(A_1-2A_2-A_3+A_4+2A_5\right)
&\longrightarrow
\begin{bmatrix}
 0 & 1 \\
0 & 0 \\
\end{bmatrix}\\
\frac{1}{3 \sqrt{5}}\left(-2A_1+A_2-A_3+A_4+2A_5\right)
&\longrightarrow
\begin{bmatrix}
 0 & 0\\
 1 & 0 \\
\end{bmatrix} & & &
\frac{1}{15}(7I + 2A_3 - 3A_4 + 5A_5 - J)
&\longrightarrow
\begin{bmatrix}
 0 & 0 \\
 0 & 1 \\
\end{bmatrix}
\end{align*}
Again, $\tilde{E_2}=\tilde{\Delta}^{-1}(e_{11})$, $\tilde{E_3}=\tilde{\Delta}^{-1}(e_{12})$, $\tilde{E_4}=\tilde{\Delta}^{-1}(e_{21})$, and $\tilde{E_5}=\tilde{\Delta}^{-1}(e_{22})$ are clear from this description.
This non-uniqueness of the choice of $E_j$ indicates that design-orthogonality is not well defined when $\dim A_t >1$. In the following section we consider this more carefully and consider how to extend design-orthogonality in the noncommutative case.

\section{Design-orthogonality in the Schurian case}

In the theory of association schemes, 
two vectors $u$ and $v$ are \emph{design-orthogonal} if $(uE_j)(vE_j)=0$ for each nonprincipal
minimal idempotent $E_j$. When $u$ and $v$ are design-orthogonal,
we have $uv^*=\frac{(u\allones^*)(\allones v^*)}{n}$ (where $n$ is the number of vertices)
by a theorem of Roos \cite[Corollary 3.3]{Roos}, adapted to the case that $u$ and $v$ have complex entries.
So for example, if $u$ and $v$ are $\{0,1\}$-vectors, or in other words, characteristic vectors
of subsets $A$ and $B$, then design-orthogonality translates to a formula for the cardinality of the intersection
of $A$ and $B$: $|A\cap B|=|A||B|/n$. In the noncommutative case, the choice of minimal idempotents is not unique, and so we must relax the definition of design-orthogonality slightly to account for this. We discuss this more later, but for now we see that even given a fixed choice of idempotents, design-orthogonality ``averages out" to zero across the subspaces determined by the Molien-Artin-Wedderburn decomposition.

Let $Y$ be a subset of $\Omega$. 
Hobart \cite{Hobart} defines the following matrix as the coherent configuration analogue of the \emph{outer distribution} of $Y$ (see also \cite[Section 12.6]{Godsil})]: 
$D(Y):=\sum_{i=0}^d\frac{1}{k_i}(\chi_YA_i^{\top}\chi_Y^\top)A_i$.
We can extend this notion to any vector $u$ of $\mathbb{C}\Omega$:
\begin{equation} \label{eq:DwithA}
D(u):=\sum_{i=0}^d\frac{1}{k_i}(uA_i^{\top}u^*)A_i.
\end{equation}
By Lemma \ref{like28}, we have
\begin{equation} \label{eq:DwithE}
D(u)=n\sum_{i=0}^d\frac{1}{m_i}(uE_i^*u^*)E_i.
\end{equation}
and $D(u)$ is positive semidefinite\footnote{To see that $D(u)$ is positive semidefinite, note that the rank 1 matrix
$uu^*$ is positive semidefinite and its projection (see Equation \ref{eq:projection}) is also positive semidefinite.
This has been used as a generalisation of Delsarte's linear programming bound in \cite{Hobart,HobartWilliford2013,HobartWilliford2014} to constrain
the existence of certain subsets of coherent configurations.}. 
In the commutative case, we see that $D(u)$, in some sense, unifies the inner and dual inner distributions of $u$: the coefficients of the $A_i$ yield the inner distribution of $u$, whereas the coefficients of the $E_i$ yield the MacWilliams transform of $u$. Also, in the commutative case, two vectors $u$ and $v$ are \emph{design-orthogonal} if $(uE_ju^*)(vE_jv^*)=0$ for all $j\ne 0$. Now because each $E_j$ is positive semidefinite (in the commutative case), we have the following:

\begin{proposition}\label{commutative_design_orthogonal}
In the commutative case, two vectors $u$ and $v$ are design-orthogonal if and only if 
$vD(u)v^*=\frac{1}{n^2}(u J u^*)(vJv^*)$. 
\end{proposition}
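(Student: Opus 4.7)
The plan is to expand $vD(u)v^*$ via the dual expression in Equation~\eqref{eq:DwithE}, peel off the trivial $j=0$ summand, and then invoke positivity of the remaining terms to deduce the equivalence.

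First I would note that in the commutative case every $E_j$ is a Hermitian idempotent: since the representations $\Delta_t$ commute with the $*$-operation and commutativity forces every $d_t=1$, each $E_j$ is (up to scalar) an orthogonal projector, so $E_j^*=E_j$ and $E_j$ is positive semidefinite. Substituting Equation~\eqref{eq:DwithE} into $vD(u)v^*$ then yields
\[
vD(u)v^* \;=\; n\sum_{j=0}^d \frac{1}{m_j}(uE_ju^*)(vE_jv^*).
\]

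Next I would compute the $j=0$ contribution. Since $E_0=\tfrac{1}{n}J$, a short trace calculation gives $m_0=n\langle E_0,E_0\rangle=n$, so this term is exactly $\tfrac{1}{n^2}(uJu^*)(vJv^*)$. Rearranging reduces the proposition to the assertion that $\sum_{j=1}^d \tfrac{1}{m_j}(uE_ju^*)(vE_jv^*)=0$ if and only if $u$ and $v$ are design-orthogonal. Because each nonprincipal $E_j$ is positive semidefinite, every factor $uE_ju^*$ and $vE_jv^*$ is a non-negative real and every $m_j>0$, so a sum of non-negative reals vanishes if and only if each summand vanishes; that is, if and only if $(uE_ju^*)(vE_jv^*)=0$ for all $j\neq 0$, which is precisely the definition of design-orthogonality.

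The only point requiring any care, and the closest thing to an obstacle, is justifying Hermiticity and positive semidefiniteness of the $E_j$ in the commutative case (so that $E_j^*=E_j$ makes Equation~\eqref{eq:DwithE} simplify cleanly and so that the positivity argument applies to each quadratic form); once that is in hand, the rest is unfolding the definition of $D(u)$, isolating the $j=0$ term, and reading off the non-negative-sum argument.
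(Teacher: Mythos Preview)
Your proposal is correct and follows essentially the same approach as the paper. The paper's proof is a one-line observation that $\sum_{j=1}^d (uE_ju^*)(vE_jv^*)=0$ is equivalent to each summand vanishing by positive semidefiniteness of the $E_j$; you have simply made explicit the expansion of $vD(u)v^*$ via Equation~\eqref{eq:DwithE}, the identification of the $j=0$ term with $\tfrac{1}{n^2}(uJu^*)(vJv^*)$, and the Hermiticity of the $E_j$ in the commutative case, all of which the paper leaves to the reader.
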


\begin{proof}
We simply observe that
\[
\sum_{i=1}^d (uE_ju^*)(vE_jv^*)=0\iff (\forall j\ne 0)\quad (uE_ju^*)(vE_jv^*)=0
\]
because each term on the left-hand side is non-negative (by positive semidefiniteness).
\end{proof}

In the noncommutative case, many of the $E_i$ are nilpotent and hence not positive semidefinite.
We extend Proposition \ref{commutative_design_orthogonal} to Schurian coherent configurations, using some 
ideas from a proof of Roos \cite[Theorem 3.4]{Roos} of the analogous result
for association schemes.

\begin{theorem}\label{thm:DesignOrthogConstIntersection}
Suppose $G$ is a finite permutation group acting transitively on a set $\Omega$.
Let $u, v \in \mathbb{C}\Omega$. Then $u(v^g)^*$ is a constant for all $g \in G$ if and only if 
\[
vD(u)v^*=\frac{1}{n^2}(u J u^*)(vJv^*).
\]
\end{theorem}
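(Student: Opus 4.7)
The plan is to reduce the statement ``$u(v^g)^{*}$ is constant in $g$'' to a variance identity and then match the two resulting moments to the two sides of the asserted equation. Specifically, a complex-valued function $f(g):=u(v^{g})^{*}$ on the finite group $G$ is constant in $g$ if and only if its second moment equals the square of the modulus of its mean, i.e.,
\[
\frac{1}{|G|}\sum_{g\in G}|f(g)|^{2}\;=\;\Big|\frac{1}{|G|}\sum_{g\in G}f(g)\Big|^{2},
\]
since the difference of the two sides is precisely the non-negative variance $\frac{1}{|G|}\sum_{g}|f(g)-\overline{f}|^{2}$. So everything reduces to evaluating each side and matching.

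The right-hand side is the easy one. By Lemma~\ref{averageorbit} applied to $v$, $\frac{1}{|G|}\sum_{g}v^{g}=\frac{v\allones^{\top}}{n}\allones$, and therefore $\frac{1}{|G|}\sum_{g}u(v^{g})^{*}=\frac{1}{n}uJv^{*}$. Squaring the modulus (and using $J^{*}=J$, so that $\overline{uJv^{*}}=vJu^{*}$) yields $\frac{1}{n^{2}}(uJu^{*})(vJv^{*})$, which is exactly the right-hand side of the theorem.

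For the second-moment side, I would set $M:=\frac{1}{|G|}\sum_{g\in G}(v^{g})^{*}v^{g}$, so that $\frac{1}{|G|}\sum_{g}|u(v^{g})^{*}|^{2}=uMu^{*}$. The crux of the proof is to identify $M$ with $D(v)$: because each $A_{i}$ is $G$-invariant, conjugating $M$ by the permutation matrix of an arbitrary element of $G$ leaves $M$ unchanged, so $M$ lies in the adjacency algebra $\mathcal{A}$. Writing $M=\sum_{i}c_{i}A_{i}$ and extracting coefficients via the Frobenius-orthogonality of the $A_{i}$, the $G$-invariance of $A_{i}^{\top}$ collapses the average over $g$ to a single term and gives $c_{i}=vA_{i}^{\top}v^{*}/k_{i}$, so $M=D(v)$. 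In particular $uD(v)u^{*}=uMu^{*}$ is real and non-negative.

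The theorem is stated in terms of $vD(u)v^{*}$ rather than $uD(v)u^{*}$, so a small cleanup is needed. A direct comparison of the defining sums, using the identity $\overline{wA_{i}w^{*}}=wA_{i}^{\top}w^{*}$ for real $A_{i}$, shows $\overline{vD(u)v^{*}}=uD(v)u^{*}$; since $uD(v)u^{*}$ is real, the two expressions coincide. Putting everything together yields the claimed equivalence. The main obstacle I anticipate is the identification $M=D(v)$, which requires both the $\mathcal{A}$-membership of $M$ and the coefficient computation; the remaining steps are bookkeeping built on Lemma~\ref{averageorbit} and the variance identity.
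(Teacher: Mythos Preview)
Your argument is correct and shares the same skeleton as the paper's: both reduce the constancy of $f(g)=u(v^{g})^{*}$ to the equality-case of Cauchy--Schwarz (second moment equals squared modulus of the mean), evaluate the mean via Lemma~\ref{averageorbit} to obtain $\tfrac{1}{n^{2}}(uJu^{*})(vJv^{*})$, and then identify the second moment with $vD(u)v^{*}$.

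The genuine difference lies in how the second moment is computed. The paper expands $\sum_{g}|u(v^{g})^{*}|^{2}$ coordinate-wise, regroups the double sum according to the orbital containing $(i,j)$, and then invokes Lemma~\ref{inbetweenstep} (hence Lemma~\ref{like28}) to pass to the $E_{j}$-basis before recognising the result as $vD(u)v^{*}$ via \eqref{eq:DwithE}. You instead form the averaged matrix $M=\tfrac{1}{|G|}\sum_{g}(v^{g})^{*}v^{g}$, observe that $M$ lies in the commutant of the permutation representation (which for a Schurian configuration is exactly $\mathcal{A}$), and read off its $A_{i}$-coefficients directly to get $M=D(v)$; the final swap $uD(v)u^{*}=vD(u)v^{*}$ then comes from the involution $A_{i}\leftrightarrow A_{i}^{\top}$ together with Hermiticity of $D(\cdot)$. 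Your route never touches the $E_{j}$-basis and is arguably cleaner for this particular theorem; the paper's route, on the other hand, produces the intermediate identity $\tfrac{1}{|G|}\sum_{g}|u(v^{g})^{*}|^{2}=\sum_{j}\tfrac{1}{m_{j}}(uE_{j}^{*}u^{*})(vE_{j}v^{*})$, which is exactly what is needed for the subsequent remark about the ``balancing'' of possibly negative terms in the noncommutative case. One small wording point: the $G$-invariance of $M$ is not a consequence of the $G$-invariance of the $A_{i}$; rather, $M$ is $G$-invariant by the reindexing $g\mapsto gh$, and then the fact that the $A_{i}$ span the commutant places $M$ in $\mathcal{A}$.
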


\begin{proof}
Let $u,v\in\mathbb{C}\Omega$. Then
\begin{align*}
\sum_{g\in G}|u(v^g)^*|^2&=\sum_{g\in G}(u(v^g)^*)((v^g)u^*)\\
&=\sum_{g\in G}\sum_{i,j=1}^n u_{i}\overline{v_{i^g}}\overline{u_{j}}v_{j^g}\\
&=\sum_{t=0}^d\sum_{(i,j):A_t(i,j)=1}\left(\sum_{g\in G}u_{i}\overline{v_{i^g}}\overline{u_{j}}v_{j^g}\right)\\
&=\sum_{t=0}^d\sum_{g\in G}\sum_{(i,j):A_t(i,j)=1}u_{i}\overline{u_{j}}\overline{v_{i^g}}v_{j^g}.
\end{align*}
Now, by a similar proof to Lemma \ref{averageorbit}, 
the multiset of values
$\overline{v_{i^g}}v_{j^g}$, where $g$ ranges over $G$, consists of each value $\overline{v_i}v_j$ replicated $k_t|G|/n$ times
(the product of the size of a point stabiliser and the degree of the $t$-th relation). Therefore, 
\begin{align*}
\frac{1}{|G|}\sum_{g\in G}|u(v^g)^*|^2&=\frac{1}{n}\sum_{t=0}^d\frac{1}{k_t}\sum_{(i,j):A_t(i,j)=1}u_{i}\overline{u_{j}}\overline{v_{i}}v_{j}\\
&=\frac{1}{n}\sum_{t=0}^d\frac{1}{k_t}(uA_t^{\top}u^*)(vA_tv^*)\\ 
&=\sum_{j=0}^d\frac{1}{m_j}(uE_j^*u^*)(vE_jv^*).&\text{by Lemma } \ref{inbetweenstep}\\
&=\frac{1}{n^2}(uJu^*)(vJv^*)+\sum_{j=1}^d\frac{1}{m_j}(uE_j^*u^*)(vE_jv^*).
\end{align*}

Since $J=\allones^* \allones$, we have
    \[
        (uJu^*)(vJv^*)=(u\allones^*)(\allones u^*)(v\allones^*)(\allones v^*)=|u\allones^*|^2|v\allones^*|^2
    \]
and so 
\begin{equation}
\sum_{j=1}^d\frac{1}{m_j}(uE_j^*u^*)(vE_jv^*)= \frac{1}{|G|}\sum_{g\in G}|u(v^g)^*|^2-
\frac{1}{n^2}|u\allones^*|^2|v\allones^*|^2.\label{eq:main}
\end{equation}

Since $G$ is transitive, we have
\begin{align*}
\sum_{g\in G}u(v^g)^*&=u\left( \sum_{g\in G}v^g\right)^*
    =u\left( \frac{|G|(v\cdot \allones)}{n} \allones\right)^*&\text{by Lemma }\ref{averageorbit}\\
    &=\frac{|G|}{n}(u\allones^*)(v^*\allones).
\end{align*}
Therefore,
$|\sum_{g\in G}u(v^g)^*|=\frac{|G|}{n}|u\allones^*||v\allones^*|$
and so 
\begin{align*}
\sum_{j=1}^d\frac{1}{m_j}(uE_j^*u^*)(vE_jv^*)= \frac{1}{|G|}\sum_{g\in G}|u(v^g)^*|^2-
\frac{1}{|G|^2}|\sum_{g\in G}u(v^g)^*|^2.
\end{align*}
So by the Cauchy-Schwarz Inequality, the left-hand side is zero if and only if $u(v^g)^*$ is constant over all $g\in G$.
Finally, the left-hand side is zero if and only if 
$\sum_{j=0}^d\frac{1}{m_j}(uE_j^*u^*)(vE_jv^*)=\frac{1}{n^2}(uJu^*)(vJv^*)$, and the left-hand side of this equation is $vD(u)v^*$. 
\end{proof}

\begin{remark}
    Notice that the condition $vD(u)v^*=\frac{1}{n^2}(u J u^*)(vJv^*)$ in Theorem \ref{thm:DesignOrthogConstIntersection} is independent of the choice of irreducible representations. Now just as we observed in the proof of Proposition \ref{commutative_design_orthogonal}, all Theorem \ref{thm:DesignOrthogConstIntersection} says is that
    $u(v^g)^*$ is constant if and only if $\sum_{i=1}^d (uE_ju^*)(vE_jv^*)=0$. However, in the noncommutative
    case, the $(uE_ju^*)(vE_jv^*)$ can be negative, and so there is some sort of ``balancing" property here.
 \end{remark}

\begin{example}\label{ex:choiceofdecomp}
Here we demonstrate the impact of the choice of decomposition, by returning to Example \ref{ex:agl5}.
First, we have $D(u) = \frac{1}{10} (3I + 3A_5 +J)$ whether computed using \eqref{eq:DwithA} or \eqref{eq:DwithE}, independent of the choice of $E_j$ or $\tilde{E_j}$.  Consider the vectors 
\[u=(1, 1, 0, 0, 0, 0, 0, 0, 1, 1) \text{ and } v = (-4, -1, -1, 1, 1, -1, -1, 1, 4, 1).
\]
Now, 
$u(v^g)^* = 0$ for all $g \in G$ and $v D(u) v^* = \frac{1}{100}(uJu^*)(vJv^*) = 0$. 
Moreover, 
\begin{align*}
    (uE_ju^*)(vE_jv^*)=0&\text{ for }j = 1 \ldots, 5\\
    (u\tilde{E_j}u^*)(v\tilde{E_j}v^*) \neq 0&\text{ for }j = 2,3,4,5.
\end{align*}
Consider also $w =( 1, 0, 0, 1, 1, 0, 0, 1, 0, 1 )$.
In this case, $w D(u) w^* = \frac{1}{100}(uJu^*)(vJv^*) = 4$  and $u(w^g)^*=2$ for all $g \in G$. This time, 
\[(uE_ju^*)(wE_jw^*)= (u\tilde{E_j}u^*)(w\tilde{E_j}w^*)=0\text{ for }j = 1, \ldots, 5.\]
\end{example}

\section{Synchronisation hierarchy in terms of design-orthogonality}

Let $G$ be a permutation group acting on a finite set $\Omega$. We say that $G$ is \emph{nonspreading}
if there is a nontrivial multiset $A$ of elements of $\Omega$, a subset $B\subset \Omega$, and a positive
integer $\lambda$ such that $|A|$ divides $|\Omega|$ and $\chi_A\cdot \chi_{B}^g=\lambda$ for all $g\in G$.
A group is \emph{spreading} if it is not nonspreading.

A well-known result of representation theory is that the permutation module for a 2-transitive finite group
$G$ over $\mathbb{C}$ decomposes as the direct sum of the trivial module and an irreducible module. 
Similarly, the permutation module for a 2-homogeneous (a.k.a, 2-set transitive) finite group $G$ 
over $\mathbb{R}$ decomposes as the direct sum of the trivial module and an irreducible module.
This property can be weakened slightly to give the definition of a \emph{\qi-group}, that is, a transitive permutation group whose permutation module over $\Q$ is the direct sum of the trivial module and an irreducible module. Thus every 2-homogeneous group is a \qi-group, but there are examples of \qi-groups that are not 2-homogeneous. 
By identifying a set/multiset with its characteristic vector leads to the following two corollaries of Theorem \ref{thm:DesignOrthogConstIntersection}.

\begin{corollary}\label{nonspreading_defn}\samepage
Suppose $G$ is a finite permutation group acting transitively on a set $\Omega$.
Then $G$ is nonspreading if and only if there are nontrivial nonconstant vectors $u,v\in \Q\Omega$
such that:
\begin{enumerate}[(i)]
\item $u$ is a $\{0,1\}$-vector; 
\item $v$ has non-negative integer entries;
\item $(v \cdot \allones)$ divides $|\Omega|$;
\item $vD(u)v^\top=\frac{1}{n^2}(u\cdot \allones)^2(v\cdot \allones)^2$. 
\end{enumerate}
\end{corollary}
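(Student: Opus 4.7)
The plan is to read off Corollary \ref{nonspreading_defn} as an essentially direct specialisation of Theorem \ref{thm:DesignOrthogConstIntersection} to characteristic vectors, with the bulk of the work being a careful translation between combinatorial and vector language.

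First I would set up the dictionary: a subset $B\subseteq\Omega$ corresponds to $u=\chi_B$, which is automatically a $\{0,1\}$-vector with $u\cdot\allones=|B|$; a multiset $A$ on $\Omega$ corresponds to $v=\chi_A$, which has non-negative integer entries with $v\cdot\allones=|A|$. Under this dictionary, conditions (i)--(iii) of the corollary match ``$B$ is a subset of $\Omega$'', ``$A$ is a multiset on $\Omega$'', and ``$|A|$ divides $|\Omega|$'' respectively. The nontrivial/nonconstant hypothesis on $u$ amounts to $B$ being a proper nonempty subset, while the nontrivial/nonconstant hypothesis on $v$ amounts to $A$ being a nontrivial multiset in the sense defined earlier in the paper.

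Next I would reconcile the difference between the two natural conventions regarding which vector the group acts on. Because the $G$-action on $\mathbb{C}\Omega$ is by coordinate permutation we have $\chi_B^g=\chi_{B^g}$, and because $g\mapsto g^{-1}$ is a bijection of $G$, a short calculation shows
\[
\chi_A\cdot\chi_B^g \;=\; \chi_{A^{g^{-1}}}\cdot\chi_B \;=\; u\,(v^{g^{-1}})^\top,
\]
so the multisets $\{\chi_A\cdot\chi_B^g:g\in G\}$ and $\{u(v^g)^\top:g\in G\}$ coincide. Consequently ``$\chi_A\cdot\chi_B^g$ is constant in $g$'' is equivalent to ``$u(v^g)^\top$ is constant in $g$''.

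The central step is then to apply Theorem \ref{thm:DesignOrthogConstIntersection}: because $u$ and $v$ are real, $v^*=v^\top$ and $uJu^*=(u\cdot\allones)^2$, $vJv^*=(v\cdot\allones)^2$, so the theorem converts ``$u(v^g)^\top$ is constant in $g$'' into exactly equation (iv). The only delicate piece is verifying that the common value $\lambda$ in the definition of nonspreading is a \emph{positive} integer, not merely a non-negative real. It is a non-negative integer since $\chi_A\cdot\chi_B^g$ is a sum of non-negative integer multiplicities; and since $G$ is transitive and both $B$ and the support of $A$ are nonempty (by the nontriviality of $u$ and $v$), one can choose $g\in G$ mapping some point of $B$ into $\mathrm{supp}(A)$, forcing $\lambda>0$. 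The main obstacle is simply this bookkeeping around the two slightly different nontriviality conventions and ensuring $\lambda\neq 0$ in the reverse direction; once that is settled, both implications collapse into a single application of Theorem \ref{thm:DesignOrthogConstIntersection}.
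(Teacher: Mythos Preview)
Your proposal is correct and follows essentially the same route as the paper, which offers no explicit proof beyond the sentence ``By identifying a set/multiset with its characteristic vector leads to the following two corollaries of Theorem~\ref{thm:DesignOrthogConstIntersection}.'' Your write-up simply fills in the bookkeeping the paper leaves implicit (the $g\mapsto g^{-1}$ bijection, the observation that $uJu^\top=(u\cdot\allones)^2$ for real $u$, and the check that $\lambda>0$).
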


\begin{corollary}\label{nonQI_defn}
Suppose $G$ is a finite permutation group acting transitively on a set $\Omega$.
Then $G$ is non-\qi\ if and only if there are nontrivial nonconstant vectors $u,v\in \Q\Omega$
such that:
\begin{enumerate}[(i)]
\item $u$ and $v$ have non-negative integer entries;
\item $vD(u)v^\top=\frac{1}{n^2}(u\cdot \allones)^2(v\cdot \allones)^2$. 
\end{enumerate}
\end{corollary}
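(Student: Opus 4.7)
The plan is to reduce condition~(ii) to a constancy-of-intersection statement via Theorem~\ref{thm:DesignOrthogConstIntersection}, and then to translate that into a statement about the $\Q G$-submodule structure of $\Q\Omega$. Since $u, v$ are real, $v^*=v^\top$ and $uJu^\top=(u\cdot\allones)^2$, so Theorem~\ref{thm:DesignOrthogConstIntersection} makes~(ii) equivalent to $u\cdot v^g$ being constant in $g\in G$. Decomposing $u=u_0+u'$ with $u_0:=\tfrac{u\cdot\allones}{n}\allones$ and $u'\in N:=\allones^\perp$ (and similarly $v=v_0+v'$), a direct expansion shows
\[ u\cdot v^g = c_uc_v n + u'\cdot(v')^g, \]
where $c_u,c_v$ are the scalars defining $u_0,v_0$; Lemma~\ref{averageorbit} together with $v'\cdot\allones=0$ forces the constant value of $u'\cdot(v')^g$ (if constant) to be zero. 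Hence~(ii) is equivalent to the $G$-invariant condition $u'\perp\langle v'\rangle_G$, where $\langle v'\rangle_G$ denotes the $\Q G$-submodule of $N$ generated by $v'$.

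For the ``if'' direction, assume such $u$ and $v$ exist. Then $u', v'\in N$ are nonzero because $u, v$ are nonconstant, and $u'$ is orthogonal to $\langle v'\rangle_G$, so in particular $u'\notin \langle v'\rangle_G$. Thus $\langle v'\rangle_G$ is a nonzero proper $\Q G$-submodule of $N$, so $N$ is reducible over $\Q$; since $\Q\Omega = \Q\allones \oplus N$ always holds and $\Q\allones$ is the trivial module, $G$ is non-\qi.

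For the ``only if'' direction, suppose $G$ is non-\qi, so $N$ is reducible over $\Q$. Because the standard inner product on $\Q\Omega$ is $G$-invariant, Maschke's theorem yields a decomposition $N = M \oplus M'$ as an orthogonal direct sum of two nonzero $\Q G$-submodules. Pick nonzero rational vectors $v'\in M$ and $u'\in M'$; clearing denominators, we may assume $u', v'\in\ZZ^{|\Omega|}$. The orthogonality of $M$ and $M'$, combined with $G$-invariance of $M$, gives $u'\cdot(v')^g=0$ for all $g\in G$. Now choose positive integers $c_u, c_v$ large enough that $u := u' + c_u \allones$ and $v := v' + c_v \allones$ have strictly positive integer entries; then $u,v$ are nontrivial (all entries positive, and at least two distinct values since $u', v'\neq 0$) and nonconstant, and $u \cdot v^g = c_u c_v n$ is constant in $g$, so Theorem~\ref{thm:DesignOrthogConstIntersection} yields~(ii). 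The main subtlety is exactly this last step: turning an abstract $\Q G$-decomposition of $N$ into witnesses with non-negative integer entries, which is handled by the scaling-and-translation device above, a move that preserves both nonconstancy and the constant-intersection property.
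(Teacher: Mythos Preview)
Your proof is correct and follows the route the paper intends: both Corollaries~\ref{nonspreading_defn} and~\ref{nonQI_defn} are stated as immediate consequences of Theorem~\ref{thm:DesignOrthogConstIntersection} once (multi)sets are identified with characteristic vectors, and the paper supplies no further argument. You have carefully filled in the one step the paper leaves implicit, namely the passage between the module-theoretic definition of \qi\ and the existence of non-negative integer witnesses: the orthogonal $\Q G$-splitting of $\allones^\perp$ together with the clear-denominators-then-translate-by-$c\allones$ device is exactly what is needed, and your verification that the resulting $u,v$ are nontrivial and nonconstant is sound.
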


So we see readily that nonspreading implies non-\qi\, and it is curious that we do not yet know whether these two concepts are equivalent or not, in view of the striking similarities of Corollaries \ref{nonspreading_defn}, \ref{nonQI_defn}. 

For noncommutative coherent configurations, the choice of $E_j$ is not unique, and so it is possible for vectors to be orthogonal with respect to one choice but not another (as in Example \ref{ex:choiceofdecomp}). This motivates the following definition:

\begin{defn}[Design-orthogonal]\label{do}
As before, consider the simple decomposition of the adjacency algebra, $\mathcal{A}=\oplus_{i=0}^m \mathcal{A}_i$, and let $\Pi_t$ be the central primitive idempotents $\mathcal{A}$. In terms of the preimages $E_{ij}^t$ of the unit matrices $e_{ij}^t$, the central primitive idempotents $\Pi_t$ are
the sums $\sum_i E_{ii}^t$, for each $t$; or in other words, the preimages of the
identity matrices in each corresponding matrix algebra for any representation. As per usual, we stipulate that
$\Pi_0$ is the principal idempotent of $\mathcal{A}$.
Let $u,v\in\mathbb{C}\Omega$. Then we will say that $u$ and $v$ are \emph{design-orthogonal}
if $(u\Pi_t u^*)(v\Pi_t v^*)=0$ for all $t>0$. 
\end{defn}

The concept given by Definition \ref{do} is in-keeping with Delsarte's concept for
commutative association schemes, for in the commutative case, the central primitive idempotents are the minimal idempotents of $\mathcal{A}$ and each $\mathcal{A}_t$ is one-dimensional. We have also
extended design-orthogonality to $\mathbb{C}$, whereas the reals are assumed in
\cite{Delsarte1977} and \cite{Roos}.

\begin{remark}
Note that if $u$ and $v$ are design-orthogonal then $(u\Pi_j)(v\Pi_j)=0$ for\footnote{This is because,
for all $x\in\mathbb{C}\Omega$,
$x\Pi_jx^*=x\Pi_j^2x^*=(x\Pi_j)(\Pi_j^*x^*)=(x\Pi_j)(x\Pi_j)^*=\|x\Pi_j\|$ and positive-definiteness of the form.}
 $j>0$. This reverses in the commutative case where $\Pi_j = E_j$. However it does not reverse in the noncommutative case, as illustrated by Example \ref{ex:choiceofdecomp}, precisely because non-unique orthogonal decompositions of $\Pi_j$ are possible.
\end{remark}

\begin{proposition}\label{propn:do}
Let $G$ be a transitive permutation group on a finite set $\Omega$ and let $u, v \in \mathbb{C}\Omega$. Considering the coherent configuration $\mathcal{K}(G)$, if $u$ and $v$ are design-orthogonal then $u(v^g)^*=uv^*$ for all $g\in G$.
\end{proposition}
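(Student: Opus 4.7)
I would prove this by a Cauchy--Schwarz argument keyed to the central primitive idempotent decomposition $I = \sum_{t \geq 0} \Pi_t$, with design-orthogonality used to annihilate every nonprincipal component.

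First, I would record three properties of each $\Pi_t$: it is idempotent (being the preimage of an identity matrix in $\mathcal{A}_t$); it is hermitian (because $\Delta_t$ commutes with the $*$-operation and the identity is self-adjoint); and, lying in $\mathcal{A}$, it commutes with every permutation matrix $P_g$ for $g\in G$, since $\mathcal{A}$ is generated by the $G$-invariant adjacency matrices $A_i$. Together these imply that for any $x,y\in\mathbb{C}\Omega$ one has $x\Pi_t y^* = (x\Pi_t)(y\Pi_t)^*$, so $(x,y)\mapsto x\Pi_t y^*$ is a positive semidefinite sesquilinear form. Moreover $v^g \Pi_t (v^g)^* = (v\Pi_t)^g((v\Pi_t)^g)^* = v\Pi_t v^*$, because $\Pi_t$ commutes with $P_g$ and the $G$-action preserves the standard Hermitian norm on $\mathbb{C}\Omega$.

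The main step is then Cauchy--Schwarz applied to this form: for each $t>0$ and each $g\in G$,
$$|u\Pi_t (v^g)^*|^2 \leq (u\Pi_t u^*)\bigl(v^g \Pi_t (v^g)^*\bigr) = (u\Pi_t u^*)(v\Pi_t v^*) = 0,$$
where the last equality is design-orthogonality. Hence $u\Pi_t(v^g)^* = 0$ for all $t>0$ and all $g$. Decomposing $u(v^g)^* = \sum_t u\Pi_t (v^g)^*$, only the $t=0$ contribution survives, so $u(v^g)^* = u\Pi_0 (v^g)^*$. Since $\Pi_0 = \tfrac{1}{n}\allones^*\allones$ and $\allones\cdot v^g = \allones\cdot v$, this quantity is manifestly independent of $g$; specialising to $g = e$ identifies it with $uv^*$, giving $u(v^g)^* = uv^*$ for every $g\in G$.

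The only delicate points to verify carefully are that $\Pi_t$ is both hermitian and $G$-invariant, as both are needed simultaneously: hermiticity and idempotence to make $x\Pi_t y^*$ a semidefinite form on which Cauchy--Schwarz is meaningful, and $G$-invariance to replace $v^g\Pi_t(v^g)^*$ by $v\Pi_t v^*$ so that design-orthogonality can be invoked. Everything else is bookkeeping.
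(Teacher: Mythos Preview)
Your proof is correct and follows essentially the same approach as the paper's. Both arguments rest on the same three facts about the central primitive idempotents $\Pi_t$ (hermitian, idempotent, $G$-invariant) and use design-orthogonality to kill every nonprincipal contribution to $u(v^g)^*$. The only difference is cosmetic: the paper decomposes $u=\sum_t u\Pi_t$ and $v=\sum_t v\Pi_t$, observes that $u\Pi_t u^*=\|u\Pi_t\|^2$ so that design-orthogonality forces $u\Pi_t=0$ or $v\Pi_t=0$ for each $t>0$, and then drops the vanishing terms; you instead keep $u$ and $v$ intact and invoke Cauchy--Schwarz on the semidefinite form $(x,y)\mapsto x\Pi_t y^*$ to annihilate $u\Pi_t(v^g)^*$ directly, which is the same observation in slightly different packaging.
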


\begin{proof}
Let $u, v \in \mathbb{C}\Omega$. Throughout this proof, we refer to the the simple components $\mathcal{A}_t$ of the adjacency algebra $\mathcal{A}$, as given in Definition \ref{do}.
Write $u$ and $v$ in terms of their projections, where $u_i = \Pi_i(u)$ and $v_i = \Pi_i(v)$:
\begin{align*}
u&=\sum_{i=0}^m u_i,\quad v=\sum_{i=0}^m v_i.
\end{align*}
Again, we stipulate that the $0$-th component consists of the constant vectors,
and so $v_0^g=v_0$ for all $g\in G$.
By assumption, $u_t=0$ or $v_t=0$ for all $t>0$. So, for all $g\in G$,
\begin{align*}
    u(v^*)^g-uv^*&=\sum_{i=0}^m \left(u_i(v_i^*)^g-u_iv_i^*\right)\\
    &=\left(u_0(v_{0}^*)^g-u_0v_0^*\right)+\sum_{i=1}^m u_i(v_i^*)^g\\
    &=u_0(v_{0}^*)^g-u_0v_0^*\\
    &=0.\qedhere
\end{align*}
\end{proof}

\begin{remark}
    Note that if $u$ and $v$ are such that $u(v^g)^*$ is constant for all $g \in G$, then $\langle u^G \rangle \cap \langle v^G\rangle \subseteq \langle \allones \rangle$. Hence we have a decomposition into orthogonal $G$-modules which describes the vectors $u$ and $v$. However, if the isotypic components of the permutation representation are reducible, then we can do this in infinitely many ways. This motivates the choice of $\Pi_t$ in the definition of design-orthogonality, since these project onto the isotypic components, and hence are unique.
\end{remark}

\begin{example}
Recall the Example \ref{ex:agl5} and the vectors $u$, $v$, $w$ from Example \ref{ex:choiceofdecomp}. Now, $\Pi_0 = E_0$, $\Pi_1 = E_1$ and $\Pi = E_2 + E_5 = \tilde{E_2}+\tilde{E_5}$, which is now independent of any choice of representation $\Delta$. Here $(u \Pi_j u^*)(v\Pi_jv^*) = 0$ and $(w \Pi_j w^*)(v\Pi_jv^*) =0$ for $j = 1,2$ indicating that $v$ is design-orthogonal to both $u$ and $w$. However, $(u \Pi_2 u^*)(w\Pi_2 w^*) \neq 0$ so they are not design-orthogonal. This is the case even though $u(v^g)^*$ is constant for all $g\in G$. Moreover, note that $\mathrm{Im}(\Pi_2) = \mathrm{Im}(E_2) \oplus \mathrm{Im}(E_5) = \mathrm{Im}(\tilde{E_2}) \oplus \mathrm{Im}(\tilde{E_5})$ is an isotypic component of the permutation module.
\end{example}

\begin{defn}
Let $(\Omega, \mathcal{R})$ be a homogeneous coherent configuration, then the \emph{symmetrisation} $(\Omega, \mathcal{R})^{\rm{Sym}}$ is constructed by keeping each relation which is symmetric and replacing each pair of nonsymmetric converse relations by their union. This may or may not be an association scheme. If it is, then $(\Omega, \mathcal{R})$ is said to be \emph{stratifiable} (following Bailey \cite{Bailey}).
\end{defn}

\begin{theorem}[{\cite[Theorem 1]{ABC_strat}}]\label{thm:strat}
For a finite transitive permutation group $G$, the following conditions are equivalent:
\begin{enumerate}[(a)]
\item $\mathcal{K}(G)$ is stratifiable;
\item the symmetrised orbitals of $G$ form an association scheme;
\item the symmetric matrices in $\mathcal{K}(G)$ form a subalgebra of $\mathcal{A}$;
\item the permutation representation of $G$ is real-multiplicity-free;
\item each complex irreducible constituent of the permutation character of $G$ either
has multiplicity 1, or has multiplicity 2 and quaternionic type (that is, they have Frobenius-Schur index $-1$).
\end{enumerate}
\end{theorem}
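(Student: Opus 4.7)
The plan is to handle the equivalences in two stages: the combinatorial chain (a) $\Leftrightarrow$ (b) $\Leftrightarrow$ (c) by direct translation between relations and adjacency matrices, and then the representation-theoretic chain (c) $\Leftrightarrow$ (d) $\Leftrightarrow$ (e) using the adjacency algebra as the commutant of the permutation representation, together with Frobenius--Schur theory.

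For (a) $\Leftrightarrow$ (b), I unpack the definition of symmetrisation: the symmetrised orbitals are precisely the relations of $(\Omega, \mathcal{K}(G))^{\mathrm{Sym}}$, and since they are all symmetric, this partition is a coherent configuration iff it is an association scheme. For (b) $\Leftrightarrow$ (c), note that the matrices $A_i + A_i^\top$ (or $A_i$ when $R_i = R_i^\top$) are the adjacency matrices of the symmetrised orbitals and span the symmetric subspace of $\mathcal{A}$; this subspace is closed under multiplication iff products of symmetric adjacency matrices decompose with constant coefficients on the symmetric basis, which is precisely the intersection-number condition defining a symmetric (hence commutative) coherent configuration.

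For (c) $\Leftrightarrow$ (d), I would use the classical fact that the real adjacency algebra $\mathcal{A}_{\mathbb{R}}$ coincides with the commutant $\mathrm{End}_G(\mathbb{R}\Omega)$. Decomposing $\mathbb{R}\Omega = \bigoplus_i m_i V_i$ into real irreducibles yields $\mathcal{A}_{\mathbb{R}} \cong \prod_i M_{m_i}(D_i)$ with $D_i = \mathrm{End}_G(V_i) \in \{\mathbb{R}, \mathbb{C}, \mathbb{H}\}$. The transpose on $\mathrm{End}_{\mathbb{R}}(\mathbb{R}\Omega)$ restricts to an involution on each block, and since the product of symmetric matrices $A, B$ is symmetric iff $AB = BA$, the symmetric elements form a subalgebra iff they pairwise commute. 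A case analysis on each $M_{m_i}(D_i)$---noting that for $m_i = 1$ the induced involution is standard conjugation on $\mathbb{C}$ or quaternionic conjugation on $\mathbb{H}$, both with commutative real fixed points---shows this occurs exactly when every $m_i = 1$, which is (d).

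Finally, for (d) $\Leftrightarrow$ (e), I apply Frobenius--Schur theory: each real irreducible $V_i$ complexifies either to a single complex irreducible $\chi$ of real type, to $\chi \oplus \bar\chi$ with $\chi$ of complex type, or to $\chi \oplus \chi$ with $\chi$ of quaternionic type. The multiplicity $m_{V_i}$ of $V_i$ in $\mathbb{R}\Omega$ equals the multiplicity $m_\chi$ of $\chi$ in $\mathbb{C}\Omega$ in the first two cases, and $m_\chi/2$ in the quaternionic case. Imposing $m_{V_i} \leq 1$ for every $i$ therefore translates directly to the multiplicity conditions in (e). The main obstacle is the (c) $\Leftrightarrow$ (d) step: one must carefully identify which involution the transpose induces on each Artin--Wedderburn block $M_{m_i}(D_i)$, particularly the quaternionic blocks, and verify that commutativity of the symmetric elements really does force $m_i = 1$ in every division-algebra case.
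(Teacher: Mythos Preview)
The paper does not prove this theorem at all: it is stated with an explicit citation \cite[Theorem 1]{ABC_strat} and no proof environment follows. So there is no ``paper's own proof'' to compare your proposal against; the result is simply imported from the literature (and is essentially Bailey's characterisation of stratifiability, which the paper also invokes via \cite{Bailey} in the surrounding discussion and in the proof of Proposition~\ref{prop:stratifiableidempotents}).

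That said, your sketch is a reasonable outline of how such a proof goes. The equivalences (a)$\Leftrightarrow$(b)$\Leftrightarrow$(c) are indeed translations, using that the span of the $A_i+A_i^\top$ is exactly the symmetric part of $\mathcal{A}$ and that closure under multiplication is precisely the missing axiom for the symmetrisation to be a coherent configuration. For (c)$\Leftrightarrow$(d) you are using the right structural fact, namely that for a Schurian configuration the real adjacency algebra coincides with $\mathrm{End}_G(\mathbb{R}\Omega)$, and that the symmetric elements of a $*$-closed algebra form a subalgebra if and only if they pairwise commute. The point you flag as the obstacle is genuine: one must check that on each Wedderburn block $M_{m_i}(D_i)$ the involution induced by transpose (equivalently, by the adjoint for the $G$-invariant inner product on $\mathbb{R}\Omega$) has commutative fixed-point set exactly when $m_i=1$. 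This is handled by choosing an orthogonal decomposition of each isotypic component, so that the involution becomes conjugate-transpose over $D_i$; for $m_i\geq 2$ the Hermitian matrices over any of $\mathbb{R},\mathbb{C},\mathbb{H}$ fail to commute, while for $m_i=1$ the fixed points are the real scalars in each case. Your (d)$\Leftrightarrow$(e) via Frobenius--Schur is correct as stated.
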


\begin{example}
    Consider the group $G=\SL(2,5)$ acting naturally (and transitively) on the nonzero vectors of $\mathbb{F}_5^2$.
    Then $\mathcal{K}(G)$ is a 7-class homogeneous coherent configuration with valencies
    1,1,1,1,5,5,5,5. The symmetrisation of $\mathcal{K}(G)$ is a 4-class association scheme
    and has valencies 1,1,2,10,10. The automorphism group of this association scheme is isomorphic
    to $(2^{11}:A_5):2^2$. The coherent configuration $\mathcal{K}(G)$ is stratifiable but not commutative.
\end{example}

\begin{proposition}\label{prop:stratifiableidempotents}
If $\mathcal{K}(G)$ is stratifiable, then every minimal idempotent in $\mathcal{K}(G)^{\Symm}$
is either of the form $\Pi_i$ or $\Pi_i + \Pi_i^*$, where $\Pi_i$
is a central primitive idempotent of $\mathcal{K}(G)$.
\end{proposition}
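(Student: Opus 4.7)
The plan is to characterise the minimal idempotents of $\mathcal{B}$, the adjacency algebra of $\mathcal{K}(G)^{\Symm}$, by decomposing each one against the simple components of $\mathcal{A}$. By stratifiability and Theorem~\ref{thm:strat}(c), $\mathcal{B}$ equals the $\mathbb{C}$-span of the symmetric matrices in $\mathcal{A}$; it is thus a commutative $*$-subalgebra of $\mathcal{A}$, and being a finite-dimensional commutative $C^*$-algebra, every idempotent of $\mathcal{B}$ is self-adjoint. The involution $*$ permutes the simple components of $\mathcal{A}$ (as it sends minimal two-sided ideals to minimal two-sided ideals), so it induces an involution $\sigma$ on the indexing set of $\{\Pi_t\}$ with $\Pi_t^* = \Pi_{\sigma(t)}$.

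Let $F$ be a minimal idempotent of $\mathcal{B}$, and write $F = \sum_t F_t$ where $F_t := F\Pi_t \in \mathcal{A}_t$. Each $F_t$ is itself an idempotent, and self-adjointness of $F$ yields $F_t^* = F_{\sigma(t)}$, so the support $O := \{t : F_t \neq 0\}$ is $\sigma$-invariant. The first key step is to show that $O$ is a single $\sigma$-orbit: if $O$ decomposed into disjoint $\sigma$-orbits $O_1, O_2$, then the pieces $F^{(j)} := F\cdot\bigl(\sum_{t\in O_j}\Pi_t\bigr)$ would be pairwise orthogonal, nonzero, self-adjoint idempotents in $\mathcal{B}$ (each bracketed sum is self-adjoint in $\mathcal{A}$, and so lies in $\mathcal{B}$ by stratifiability), contradicting the minimality of $F$.

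It then remains to pin down $F$ on a single orbit $O$. If $|O|=2$, say $O=\{t,\sigma(t)\}$, the underlying real-irreducible constituent of $\mathbb{R}\Omega$ is of complex type; by Theorem~\ref{thm:strat}(e) the corresponding complex irreducibles appear with multiplicity one, so $d_t = d_{\sigma(t)} = 1$, $\mathcal{A}_t = \mathbb{C}\,\Pi_t$, and hence $F_t = \Pi_t$, giving $F = \Pi_t + \Pi_{\sigma(t)}$. If $O = \{t\}$ is $\sigma$-fixed, the constituent is of real or quaternionic type, giving $d_t\in\{1,2\}$; for $d_t = 1$ we immediately have $F = \Pi_t$, while for $d_t = 2$ one identifies $\mathcal{A}_t$ with $\mathbb{H}\otimes_\mathbb{R}\mathbb{C}$ and observes that the intrinsic transpose inherited from $\mathcal{A}$ is the $\mathbb{C}$-linear extension of quaternionic conjugation on $\mathbb{H}$, whose fixed subspace is $\mathbb{R}\otimes_\mathbb{R}\mathbb{C} = \mathbb{C}\,\Pi_t$. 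Either way $\mathcal{B}\cap\mathcal{A}_t = \mathbb{C}\,\Pi_t$, so $F = \Pi_t$.

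The main obstacle is the quaternionic subcase: although $\mathcal{A}_t\cong M_2(\mathbb{C})$ contains a $3$-dimensional space of matrices that are symmetric in the ordinary sense, only a $1$-dimensional subspace is symmetric with respect to the transpose intrinsic to the embedding $\mathcal{A}_t\subseteq\mathcal{A}$. Establishing this dimension count carefully, by tracking the transpose through the Artin--Wedderburn decomposition $\mathrm{End}_{\mathbb{R}G}(\mathbb{R}\Omega)=\bigoplus_\ell D_\ell$ and its complexification, is the crux of the argument.
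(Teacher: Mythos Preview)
Your plan takes a genuinely different route from the paper. The paper does not decompose a given minimal idempotent~$F$ at all; instead it \emph{constructs} the candidates $M_i:=\Pi_i$ (if $\Pi_i$ is symmetric) or $M_i:=\Pi_i+\Pi_i^\top$ (otherwise), checks that the distinct $M_i$ are pairwise orthogonal idempotents summing to $I$, and then simply \emph{counts}: using the character-theoretic identity $\sum m_\eta=r'$ from Bailey together with Theorem~\ref{thm:strat}(e), it shows there are exactly $r'$ distinct $M_i$, which forces them to coincide with the $r'$ minimal idempotents of $\mathcal{K}(G)^{\Symm}$. This bypasses entirely the case analysis you propose, in particular the delicate quaternionic subcase.

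Your structural approach is reasonable in spirit, but as written it contains a genuine gap: you are using the wrong involution. The central primitive idempotents $\Pi_t$ are orthogonal projections onto the isotypic components of $\mathbb{C}\Omega$, hence Hermitian; so the conjugate-transpose $*$ fixes every $\Pi_t$ and your $\sigma$ is the identity. With that $\sigma$, every singleton is a $\sigma$-orbit, and your splitting argument would force $F$ to be supported on a single $\mathcal{A}_t$ --- which is false exactly when a complex-type constituent occurs, since then the minimal idempotent of $\mathcal{B}$ is $\Pi_t+\overline{\Pi_t}$ and straddles two components. The step that actually fails is the claim ``each bracketed sum is self-adjoint in $\mathcal{A}$, and so lies in $\mathcal{B}$'': the algebra $\mathcal{B}$ consists of the \emph{symmetric} (transpose-fixed) elements of $\mathcal{A}$, not the self-adjoint ones, and a single $\Pi_t$ is self-adjoint but need not be symmetric.

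The fix is to run your argument with the \emph{transpose} involution (equivalently, complex conjugation on characters) in place of $*$. Then $\Pi_t^\top=\Pi_{\tau(t)}$ picks out the complex-type pairs, $F=F^\top$ (since $F\in\mathcal{B}$) gives $F_t^\top=F_{\tau(t)}$, and a $\tau$-closed partial sum $\sum_{t\in O_j}\Pi_t$ is genuinely symmetric, hence in $\mathcal{B}$. With that correction your orbit argument and subsequent case analysis go through. Note, however, that the paper's counting proof reaches the conclusion without ever needing the quaternionic computation you flag as ``the crux''; the trade-off is that your approach is more self-contained, while the paper's leans on the multiplicity formula from \cite{Bailey}.
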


\begin{proof}
The proof basically follows the argument given on \cite[p. 47]{Bailey}.
Let $r$ be the permutation rank of $G$, let $s$ be the number of self-paired orbitals of $G$, 
and let $r':=(r+s)/2$ be the \emph{self-paired rank} as defined in \cite{Bailey}.
By Theorem \ref{thm:strat}, the symmetric matrices of the adjacency algebra of $\mathcal{K}(G)$ form
a subalgebra. It is clear that every symmetric matrix in this subalgebra is a linear combination
of the adjacency matrices of $\mathcal{K}(G)^{\Symm}$; that is, this subalgebra \textbf{is}
the Bose-Mesner algebra of $\mathcal{K}(G)^{\Symm}$.
Let the $E_j$ be the central primitive idempotents of $\mathcal{K}(G)^{\Symm}$ (there
are $r'$ of them), and let the $\Pi_i$ be the central primitive idempotents of $\mathcal{K}(G)$. For each $i$,
let 
\[
M_i:=\begin{cases}
\Pi_i&\text{ if }\Pi_i\text{ is symmetric}\\
\Pi_i+\Pi_i^*&\text{otherwise}.\\
\end{cases}
\]
Let $k$ be the number of $M_i$, up to multiplicity. 
Notice that the $M_i$ are symmetric and lie in the Bose-Mesner algebra of $\mathcal{K}(G)^{\Symm}$.
Now for distinct $M_i$ and $M_j$, we have $M_i^2=M_i$ and $M_iM_j=O$; that is, they
are orthogonal idempotents. Moreover, the
sum of the $M_i$, as a multiplicity free set of
orthogonal idempotents, is equal to the identity matrix, and each $M_i$ can be written uniquely 
as a linear combination of the $E_j$ with non-negative coefficients (because $M_i$ is positive semidefinite). 
It remains to show that $k$ is the correct number: we want $k=r'$.
By \cite[p. 47]{Bailey}, $\sum m_\eta=r'$ where the $m_\eta$ are the multiplicities 
of the real-irreducible characters of $G$.
Moreover, $m_\eta\in\{0,1\}$ (Theorem \ref{thm:strat}(e)) and so the number of real-irreducible characters is equal to $r'$. It follows from \cite[Theorem 4]{Bailey} that $k=r'$ and that the $M_i$ are precisely the $E_j$.
\end{proof}

We get a partial reversal to Proposition \ref{propn:do} if additional properties are satisfied for the coherent configuration and/or the vectors.

\begin{proposition}
Let $G$ be a transitive permutation group on a finite set $\Omega$ and let $u, v \in \mathbb{C}\Omega$ such that $u(v^g)^*=uv^*$ for all $g\in G$.
\begin{enumerate}[(a)]
\item If $\mathcal{K}(G)$ is commutative, then $u$ and $v$ are design-orthogonal in $\mathcal{K}(G)$.
\item If $\mathcal{K}(G)$ is stratifiable and $u,v \in \mathbb{R}\Omega$, then $u$ and $v$ are design-orthogonal in both $\mathcal{K}(G)$ and $\mathcal{K}(G)^{\rm{Sym}}$.
\end{enumerate}
\end{proposition}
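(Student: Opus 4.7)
The strategy is to reuse the intermediate identity established inside the proof of Theorem~\ref{thm:DesignOrthogConstIntersection}: under the assumption that $u(v^g)^*$ is constant on $G$, that computation already yields
\[
\sum_{j=1}^{d}\frac{1}{m_j}(uE_j^*u^*)(vE_jv^*)=0.
\]
In both parts the aim is to realise this as a sum of non-negative reals, forcing every summand to vanish.

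For part (a), in the commutative case each $E_j$ equals the central primitive idempotent $\Pi_j$, which is a Hermitian idempotent. (The central primitive idempotents of a $*$-algebra are $*$-fixed, because each representation $\Delta_t$ commutes with $*$ and sends the identity to the identity.) Consequently $(uE_j^*u^*)(vE_jv^*)=\|uE_j\|^{2}\|vE_j\|^{2}\geq 0$, so every term must be zero, which is exactly the design-orthogonality required.

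For part (b) the plan is two-stage. First, I establish design-orthogonality in $\mathcal{K}(G)^{\mathrm{Sym}}$, which by stratifiability is itself a (commutative) homogeneous coherent configuration whose adjacency matrices $A_s^{\mathrm{Sym}}$ come from adding each non-symmetric $A_t$ to its transpose, with valencies $k_s^{\mathrm{Sym}}$. Repeating the computation behind Theorem~\ref{thm:DesignOrthogConstIntersection} but over the relations of $\mathcal{K}(G)^{\mathrm{Sym}}$ requires pairing $\{t,t^*\}$ into a single contribution $\frac{1}{k_s^{\mathrm{Sym}}}(uA_s^{\mathrm{Sym}}u^*)(vA_s^{\mathrm{Sym}}v^*)$; this is where reality of $u,v$ first enters, because a scalar equals its transpose and hence $uA_tu^*=uA_t^\top u^*$ (and similarly for $v$), making the combinatorial constants line up. Applying Lemma~\ref{inbetweenstep} to $\mathcal{K}(G)^{\mathrm{Sym}}$ now expresses the result in terms of its central primitive idempotents $F_j$, which are Hermitian, so the non-negativity argument of part~(a) forces $(uF_ju^*)(vF_jv^*)=0$ for all $j>0$.

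To transfer this to $\mathcal{K}(G)$, I invoke Proposition~\ref{prop:stratifiableidempotents}: each $F_j$ is either a symmetric $\Pi_i$, from which the conclusion is immediate, or has the form $\Pi_i+\Pi_i^*$. In the second case reality of $u$ gives $u\Pi_i^*u^*=u\Pi_iu^*$ (the scalar equals its transpose, and $\overline{\Pi_i}=\Pi_i^\top$ by Hermiticity), whence $u(\Pi_i+\Pi_i^*)u^*=2u\Pi_iu^*$ and likewise for $v$, so $0=(uF_ju^*)(vF_jv^*)=4(u\Pi_iu^*)(v\Pi_iv^*)$; the same calculation handles $\Pi_i^*$ itself, covering every central primitive idempotent of $\mathcal{K}(G)$. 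The main obstacle lies in exactly this last step: non-symmetric $\Pi_i$ typically have complex entries and the non-negativity argument cannot be applied to them directly, so reality of $u,v$ is what lets us extract the individual $\Pi_i$ from the controlled combination $\Pi_i+\Pi_i^*$, explaining why this hypothesis is essential in~(b) but absent in~(a).
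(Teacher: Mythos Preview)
Your proposal is correct and follows essentially the same route as the paper. For part~(a) both you and the paper combine Theorem~\ref{thm:DesignOrthogConstIntersection} with the positive-semidefiniteness of the $E_j=\Pi_j$ (the content of Proposition~\ref{commutative_design_orthogonal}); for part~(b) both arguments pass through $\mathcal{K}(G)^{\mathrm{Sym}}$ and then invoke Proposition~\ref{prop:stratifiableidempotents} to pull back to $\mathcal{K}(G)$.

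If anything, your write-up is more explicit than the paper's at the one genuinely delicate point. The paper simply asserts that ``part~(a) may be applied to $\mathcal{K}(G)^{\mathrm{Sym}}$'', but $\mathcal{K}(G)^{\mathrm{Sym}}$ need not be Schurian, so part~(a) does not literally apply; what is really needed is that for real $u$ one has $uA_t^\top u^*=uA_tu^*$, so the $\{t,t^*\}$ terms in $D(u)$ combine into the corresponding $\mathcal{K}(G)^{\mathrm{Sym}}$ term, after which Lemma~\ref{inbetweenstep} (which needs no Schurian hypothesis) and Proposition~\ref{commutative_design_orthogonal} finish the job. You spell this out. Your extraction of $(u\Pi_iu^*)(v\Pi_iv^*)=0$ from $(uF_ju^*)(vF_jv^*)=0$ in the $F_j=\Pi_i+\Pi_i^{*}$ case is also correct: since $\Pi_i$ is a Hermitian idempotent, $u\Pi_iu^*=\lVert u\Pi_i\rVert^2$ is a non-negative real number, and for real $u$ the two summands contribute equally, giving the factor of~$4$ you obtain.
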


\begin{proof}
    Part (a) follows Theorem \ref{thm:DesignOrthogConstIntersection} and Proposition \ref{commutative_design_orthogonal}.
    Part (b) follows from Proposition \ref{prop:stratifiableidempotents}. The central idempotents $\Pi_i$ of $\mathcal{K}(G)$ are the minimal idempotents $E_i$ of $\mathcal{K}(G)^{\rm{Sym}}$ and so we may apply classic Delsarte theory (indeed, part (a) may be applied to $\mathcal{K}(G)^{\rm{Sym}}$). Moreover, since $u$ and $v$ are real, they are in the image of some sum of the $E_i$ which means they are also in the image of some sum of $\Pi_i$. Hence if $u$ and $v$ are design-orthogonal in $\mathcal{K}(G)^{\rm{Sym}}$ they are also design-orthogonal in $\mathcal{K}(G)$.
\end{proof}

\begin{theorem}[Synchronisation hierarchy]\label{hierarchy}
    Let $G$ be a transitive permutation group acting on $\Omega$. Let $u, v, w, x, y_1, \ldots, y_m \in \mathbb{R}\Omega$ such that $u, v, y_1, \ldots, y_m$ are $\{0,1\}$-vectors and $w, x$ have non-negative integer entries. Moreover, for each of these vectors, assume that there are at least two distinct entries, at most $|\Omega|-2$ of which are zero. The synchronisation hierarchy can be expressed as follows:
\begin{enumerate}[(a)]
    \item \textbf{$G$ is non-$\mathbb{Q}\rm{I}$:}
            If $w$ and $x$ are design-orthogonal in $\mathcal{K}(G)$ then $w$ and $x$ are witnesses.
            If $w$ and $x$ are witnesses and $K(G)$ is stratifiable or commutative, respectively,  then $w$ and $x$ are design-orthogonal in $\mathcal{K}(G)^{\rm{Sym}}$ or $\mathcal{K}(G)$, respectively.
    \item \textbf{$G$ is nonspreading:} Let $(w\cdot \allones)$ divides $|\Omega|$.
            If $u$ and $w$ are design-orthogonal in $\mathcal{K}(G)$ then $u$ and $w$ are witnesses.
            If $u$ and $w$ are witnesses and $K(G)$ is stratifiable or commutative, respectively,  then $u$ and $w$ are design-orthogonal in $\mathcal{K}(G)^{\rm{Sym}}$ or $\mathcal{K}(G)$, respectively.
    \item \textbf{$G$ is nonseparating:} Let $(u\cdot \allones)(v\cdot \allones)=|\Omega|$.
            If $u$ and $v$ are design-orthogonal in $\mathcal{K}(G)$ then $u$ and $v$ are witnesses.
            If $u$ and $v$ are witnesses and $K(G)$ is stratifiable or commutative, respectively,  then $u$ and $v$ are design-orthogonal in $\mathcal{K}(G)^{\rm{Sym}}$ or $\mathcal{K}(G)$, respectively.
    \item \textbf{$G$ is nonsynchronising:} Let $(y_i \cdot \allones)(v \cdot \allones) = |\Omega|$ for $1 \leqslant i \leqslant m$ and $\sum_{i=0}^m y_i=\allones$.
            If $y_i$ and $v$ are design-orthogonal in $\mathcal{K}(G)$ for $1 \leqslant i \leqslant m$ then $y_1, \ldots, y_m$ and $v$ are witnesses.
            If $y_1, \ldots, y_m$ and $v$ are witnesses and $K(G)$ is stratifiable or commutative, respectively,  then $y_i$ and $v$ are design-orthogonal in $\mathcal{K}(G)^{\rm{Sym}}$ or $\mathcal{K}(G)$, respectively, for all $i$.
    \end{enumerate}
\end{theorem}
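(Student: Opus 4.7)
The plan is to prove all four parts (a)--(d) uniformly, since each follows the same two-step pattern built on (i) Proposition \ref{propn:do}, which yields the implication ``design-orthogonal in $\mathcal{K}(G)$ $\Rightarrow$ $u(v^g)^{*}=uv^{*}$ for all $g\in G$'', and (ii) the immediately preceding proposition, which supplies the partial converse when $\mathcal{K}(G)$ is commutative or stratifiable (with real vectors). The additional ingredient for each part is a characterization of the relevant nonproperty in the synchronisation hierarchy as the existence of vectors for which $u(v^g)^{*}$ is constant in $g$, subject to prescribed support and size constraints.

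First I would recall the standard reformulation of each level of the hierarchy. Part (a) is essentially Corollary \ref{nonQI_defn} read via Theorem \ref{thm:DesignOrthogConstIntersection}: non-\qi\ is equivalent to the existence of nontrivial nonconstant $w,x$ with $w(x^g)^{\top}$ constant in $g$. Part (b) is analogous via Corollary \ref{nonspreading_defn}, under the extra divisibility constraint $(w\cdot \allones)\mid |\Omega|$. Part (c) corresponds to the classical nonseparating characterization via a pair of subsets $(A,B)$ with $|A||B|=n$ and $|A\cap B^g|=1$ for all $g$; part (d) corresponds to the partition/transversal characterization of nonsynchronising. In parts (c) and (d), Lemma \ref{averageorbit} is the bridge: whenever $u(v^g)^{*}$ is constant in $g$, that constant equals $(u\cdot \allones)(v\cdot \allones)/|\Omega|$, so the size hypothesis $(u\cdot \allones)(v\cdot \allones)=|\Omega|$ forces the value to be exactly $1$, matching the classical definition.

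For the forward direction of each part (design-orthogonal $\Rightarrow$ witness), I apply Proposition \ref{propn:do} to obtain constancy of the relevant $u(v^g)^{*}$ (or $y_i(v^g)^{*}$ in part (d)) and combine with the characterizations above. For the reverse direction (witness plus commutative or stratifiable $\Rightarrow$ design-orthogonal), the witness property already delivers constancy of $u(v^g)^{*}$, so the preceding proposition gives design-orthogonality in $\mathcal{K}(G)$ in the commutative case and in $\mathcal{K}(G)^{\mathrm{Sym}}$ in the stratifiable case. Part (d) is dispatched by running this argument separately for each pair $(y_i,v)$, using additionally that $\sum_i y_i = \allones$ so that the $y_i$ genuinely encode a partition.

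The main obstacle is not computational but rather bookkeeping: one must verify in each case that the stated size and nontriviality hypotheses on $u,v,w,x,y_i$ match the defining data of a witness of the relevant nonproperty, and that none of the implications degenerate into the trivial case (constant vectors or $|A|\in\{0,|\Omega|\}$). Since Lemma \ref{averageorbit} pins down the value of the constant inner product from the sizes, and the ``nontrivial, nonconstant'' hypothesis on the vectors rules out the trivial cases, this matching is essentially automatic once the reformulations are in place.
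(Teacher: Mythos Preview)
Your proposal is correct and mirrors exactly what the paper does: the paper presents Theorem \ref{hierarchy} as a summary of the preceding results and gives no separate proof, so the intended argument is precisely the assembly you describe---Proposition \ref{propn:do} for the forward implication, the immediately preceding proposition for the partial converse in the commutative/stratifiable cases, Corollaries \ref{nonspreading_defn} and \ref{nonQI_defn} for parts (a) and (b), and the classical set/partition characterisations together with Lemma \ref{averageorbit} for parts (c) and (d). One small point worth making explicit in your write-up of the reverse direction of (c) and (d): a nonseparating (resp.\ nonsynchronising) witness is a priori only a pair with $|A\cap B^{g}|\geq 1$, but the averaging identity forces equality to $1$, and hence constancy, before you can invoke the partial converse.
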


\begin{remark}
When $\mathcal{K}(G)$ is commutative, then design-orthogonality agrees with the standard definitions. In the noncommutative case, we need to be a bit more careful and we use Definition \ref{do}. Note, we refer to the vectors as witnesses, but technically they are the characteristic vectors of the witnesses.
\end{remark}

\section{Computational results}

As mentioned in the introduction, we do not know of a permutation group that is spreading but not \qi. An
example would be a primitive almost simple group of degree at least $2^{13}$.

\begin{theorem}
Let $G$ be a primitive group of degree at most $2^{13}-1$. Then $G$ is spreading if and only if it is \qi.
\end{theorem}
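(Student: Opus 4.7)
The plan is to reduce the theorem to a finite computation via the O'Nan--Scott classification together with known results, and then, for each remaining candidate group, to produce an explicit nonspreading witness using the design-orthogonality formulation of Corollary \ref{nonspreading_defn}. Since every \qi-group is spreading, only the forward direction requires work: one must show that every primitive non-\qi group $G$ of degree $n \leqslant 8191$ fails to be spreading. Because spreading implies synchronising, non-synchronising primitive groups are automatically both non-spreading and non-\qi, so attention may be restricted to primitive synchronising non-\qi groups. By the O'Nan--Scott theorem such a group is affine, diagonal, or almost simple; Dixon \cite{Dixon2005} shows that every affine spreading group is \qi\ (ruling out the affine case), and \cite{withSaul} shows that no diagonal group is spreading (ruling out the diagonal case). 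I am therefore reduced to almost simple primitive synchronising non-\qi groups of degree at most $8191$, which can be enumerated via a database such as GAP's primitive groups library and cross-referenced against the \qi\ classification \cite{BGLPS}.

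For each remaining candidate $G$ acting on $\Omega$, I would pre-compute the adjacency matrices $A_0,\ldots,A_d$ of $\mathcal{K}(G)$ together with the valencies $k_i$, so that for any chosen $\{0,1\}$-vector $u$ one can assemble
\[
D(u) = \sum_{i=0}^{d} \frac{1}{k_i}(uA_i^{\top}u^{*})A_i,
\]
and the nonspreading condition of Corollary \ref{nonspreading_defn} becomes a quadratic Diophantine constraint $vD(u)v^{\top} = \tfrac{1}{n^2}(u\cdot\allones)^2(v\cdot\allones)^2$ in an unknown vector $v$ of non-negative integers with $(v\cdot\allones) \mid n$. Candidate vectors $u$ would be built from unions of suborbits at a base point; searching first for witnesses $v$ that are invariant under the setwise stabiliser $G_u$ collapses the search for $v$ from dimension $n$ down to the number of $G_u$-orbits on $\Omega$, which for the groups at hand is typically very small. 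For each such $u$ the feasibility problem for $v$ can then be posed either as a linear programming problem (after relaxing integrality) or as a constraint satisfaction problem over the non-negative integer coefficients of $v$ in the $G_u$-orbit basis.

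The main obstacle will be the scale of the search at degrees near $2^{13}$: naively the number of pairs $(u,v)$ grows too fast to enumerate and $D(u)$ may have several thousand rows and columns. The twofold dimensional collapse above (passing to $G_u$-orbits on one side, and exploiting the small number of simple summands of the adjacency algebra on the other) must be combined with aggressive pruning. For each $u$ I would first test whether the linear relaxation of condition (iv) of Corollary \ref{nonspreading_defn} admits any feasible $v$ with $(v\cdot\allones) \mid n$ before attempting an integer solution, and I would order the enumeration so that $u$ with small support are tried first. Exhibiting a single witness $(u,v)$ for every almost simple primitive synchronising non-\qi\ group of degree at most $8191$ then completes the proof.
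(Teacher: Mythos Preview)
Your proposal is correct and follows essentially the same approach as the paper: reduce to the almost simple case via the O'Nan--Scott theorem together with \cite{Dixon2005} and \cite{withSaul}, enumerate the remaining non-\qi\ groups from the \textsf{GAP} primitive groups library, and for each one exhibit an explicit nonspreading witness via the design-orthogonality criterion (Corollary \ref{nonspreading_defn}) solved as an LP/CSP. The paper is terser on computational detail than your plan (it does not spell out the suborbit-based choice of $u$ or the $G_u$-orbit reduction for $v$) and does not pre-filter to synchronising groups, simply recording a witness for every almost simple non-\qi\ group in range; many of these witnesses in fact certify nonseparation, as in Lemma \ref{PSL}.
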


The results of this computation can be found at \cite{witnesses} where explicit witnesses are given for every non-\qi-group of almost simple type. We have made use of the database of primitive groups of degree at most $2^{13}-1$ in the computer
algebra system \textsf{GAP} \cite{GAP} and the \textsf{PrimGrp} \cite{PrimGrp} sub-package\footnote{For reference, the \qi-groups that are not $2$-transitive appear in the \textsf{PrimGrp} database as \texttt{PrimitiveGroup(28,2)}, \texttt{PrimitiveGroup(496, 6)}, \texttt{PrimitiveGroup(496, 7)}, \texttt{PrimitiveGroup(8128, 3)}, and \texttt{PrimitiveGroup(8128, 4)}.
}.

The group of degree $n$ and primitive identification $r$ in \textsf{GAP} has its nonspreading witness recorded in the file \texttt{NonSpreadingWitness\_n\_r.txt} in the form \texttt{[S, M]} where \texttt{S} is a set and \texttt{M} is a multiset (both given as arrays).
For example, a nonspreading witness for  \texttt{PrimitiveGroup(10, 1)} in \textsf{GAP} is given in the file \texttt{NonSpreadingWitness\_10\_1.txt} with the following contents:
\begin{center}
{\tt
[ [ 1, 2, 7, 8, 10 ], [ 1, 5, 5, 6, 6, 7, 7, 8, 9, 10 ] ]
}
\end{center}
Often we found witnesses that showed that the group in question was nonseparating (and hence nonspreading). For example the groups $\PSL(2,q)$ (and their overgroups) acting primitively of degree $q(q+1)/2$ are nonseparating, which we show in Lemma \ref{PSL}.
We still provide witnesses in the data given above, for easy verification.

\begin{lemma}\label{PSL}
    Let $q$ be an odd prime-power, at least 5. Then $\PGammaL(2,q)$ acting primitively
    of degree $q(q+1)/2$ is nonseparating, and hence, nonspreading.
\end{lemma}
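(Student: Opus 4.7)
The plan is to exhibit an explicit pair of subsets $A, B \subseteq \Omega$ witnessing nonseparation, where $\Omega$ is identified with the set of $2$-subsets of the projective line $\mathrm{PG}(1,q)$. Since $\PSL(2,q)$ is $2$-transitive on the $q+1$ points of $\mathrm{PG}(1,q)$, the induced action of $\PGammaL(2,q)$ on $\binom{\mathrm{PG}(1,q)}{2}$ is transitive and of the correct degree $\binom{q+1}{2}=q(q+1)/2$. It then suffices to produce $A$ and $B$ with $|A||B|=|\Omega|$, $|A|,|B|\ge 2$, and $|A\cap B^g|$ constant as $g$ runs over $G$, which is the defining property of a nonseparating witness in the sense of Theorem~\ref{hierarchy}(c).

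The construction is the natural one suggested by the two obvious $G$-invariant families of $2$-subsets of a projective line. Fix $p\in \mathrm{PG}(1,q)$ and let
\[
A := \{e\in\Omega : p\in e\}
\]
be the \emph{star} at $p$, so $|A|=q$. Since $q$ is odd, the integer $q+1$ is even, and a perfect matching of the $q+1$ points of $\mathrm{PG}(1,q)$ exists: let $B$ be any such matching, viewed as a set of $(q+1)/2$ pairwise disjoint $2$-subsets whose union is $\mathrm{PG}(1,q)$. Then $|B|=(q+1)/2$, so $|A||B|=q(q+1)/2=|\Omega|$, and $|A|,|B|\ge 2$ for $q\ge 5$.

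To verify the constant-intersection property, observe that for any $g\in G$, the image $B^g$ is again a perfect matching of $\mathrm{PG}(1,q)$ (collineations send matchings to matchings). Therefore
\[
A\cap B^g = \{e\in B^g : p\in e\},
\]
and because $B^g$ partitions $\mathrm{PG}(1,q)$, exactly one of its pairs contains $p$. Hence $|A\cap B^g|=1$ for every $g\in G$. Thus $(A,B)$ is a nonseparating witness, so $G$ is nonseparating, and since nonseparating implies nonspreading, the lemma follows.

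There is no real obstacle here: the only subtlety is identifying the correct model of the action and noticing that the ``star at a point'' and ``perfect matching'' are the natural dual combinatorial objects whose incidence is forced to be a $1$-design. The parity condition $2\mid q+1$ is exactly what guarantees that perfect matchings exist, which is why the lemma requires $q$ odd, and the bound $q\ge 5$ ensures the subsets are nontrivial in the sense of the synchronisation hierarchy setup.
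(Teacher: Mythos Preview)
Your proof is correct and takes a more elementary route than the paper's. The paper realises the action geometrically as $\PGammaL(2,q)$ on the external points of a nonsingular conic in $\mathrm{PG}(2,q)$, defines the $G$-invariant graph $\Lambda$ in which two external points are adjacent when they lie on a common tangent line, and then invokes the Clique--Coclique bound: the external points on a tangent line form a $q$-clique, the external points on a suitable non-tangent line form a coclique of size $(q+1)/2$, and since the product of these equals $|\Omega|$ the group is nonseparating.

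Your $2$-subset model is the same $G$-set under the standard bijection sending an external point to the pair of conic points where its two tangents touch; under this correspondence your star $A$ at $p$ is precisely the paper's tangent-line clique, and any perfect matching $B$ corresponds to a coclique of size $(q+1)/2$ in $\Lambda$. What you gain is directness: the identity $|A\cap B^g|=1$ follows immediately from the fact that a perfect matching covers each point exactly once, so neither the graph $\Lambda$, nor the Clique--Coclique Theorem, nor any conic geometry is required. The paper's route, while heavier for this particular lemma, situates the result in the graph-theoretic language (clique and independence numbers) that pervades the separating literature and connects naturally to the rest of the hierarchy discussed in the paper.
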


\begin{proof}
   Let $G:=\PGammaL(2,q)$ act on the external points of a nonsingular conic in $\mathrm{PG}(2,q)$. There are $\binom{q+1}{2}$ such points. Suppose $X$ is an external point. Then there are two tangent lines incident with $X$, and all but one of their points is external. So we can define a $G$-invariant graph $\Lambda$ of degree $2(q-1)$ where we stipulate that $X$ is adjacent to $Y$ if $XY$ is a tangent line. We claim that the clique number of $\Lambda$ is $q$
   and the coclique number $\omega(\Lambda)$ of $\Lambda$ is $(q+1)/2$, and hence $G$ is nonseparating.
   Take all of the external points on a tangent line: it is a clique of size $q$. So $\omega(\Lambda)\ge q$. Finally, consider a secant line. It has $(q+1)/2$ external points lying on it, and so the coclique number satisfies $\alpha(\Lambda)\ge (q+1)/2$. By the Clique-Coclique Theorem, we have equalities in these two bounds.
\end{proof}

Now in light of Theorem \ref{hierarchy}, suppose $u$ and $w$ provide
a witness for nonspreading for a transitive group $G$: that is,
$u$ and $w$ are design-orthogonal in $\mathcal{K}(G)$ and 
$(w\cdot \allones)$ divides $|\Omega|$. Recall that $u$ is a $\{0,1\}$-vector,
but $w$ is a vector with non-negative integer entries. Then we can scale $w$
by a positive integer to obtain 
\[
w':=\frac{|\Omega|}{w\cdot \allones} w.
\]
Then $u$ and $w'$ is another witness for nonspreading, but now $w'\cdot \allones=|\Omega|$. 
So in order to reduce complexity in our search, we can stipulate that $w$ has the stricter property
that $w\cdot \allones=|\Omega|$. In fact, sometimes a curious property occurs.
If the \textbf{only} possible
witnesses have $w$ satisfying $w\cdot \allones=|\Omega|$
then we say that the group is \emph{critically nonspreading}.

\begin{example}
Consider $G:=\PSU(5,2)$ acting naturally on 165 points.
We can realise this group action geometrically as the points $\Omega$ of the Hermitian 
generalised quadrangle $\mathsf{H}(4,4)$.
Let $V:=\mathbb{C}\Omega$ be the permutation module for $G$. 
Then $V$ is the sum of three irreducibles of degrees $1$, $44$, $120$. 
Write $V=V_0\oplus V_1\oplus V_2$
where $\dim V_0=1$, $\dim V_1=44$, and $\dim V_2=120$.
Now it is not difficult to see via mixed integer programming (e.g.,
via the MIP-solver \textsf{Gurobi} \cite{gurobi}) that
$\mathsf{H}(4,4)$ has no nontrivial \emph{$m$-ovoids} for any possible nontrivial $m$ (so $1\le m\le 4$). 
These are sets of points such that every line meets in exactly $m$ points.
By \cite[Corollary 4.2]{BambergLawPenttila}, all $\{0,1\}$-vectors of $V_0\oplus V_1$ correspond
to $m$-ovoids of $\mathsf{H}(4,4)$, and so we know that 
$V_0\oplus V_1$ does not have any $\{0,1\}$-vectors
apart from $\allones$ and the zero vector. 

Suppose $(u,w)$ is a witness for nonspreading,
where $u$ and $w$ are design-orthogonal in $\mathcal{K}(G)$ and 
$(w\cdot \allones)$ divides $165$. Recall that $u$ is a $\{0,1\}$-vector,
but $w$ is a vector with non-negative integer entries. 
Now $u$ lies in $V_0\oplus V_2$ and 
$V_2=\langle (w\cdot \allones) u^g-(u\cdot \allones)\allones:g \in G\rangle$. 
So it suffices to show that any non-negative integer vector $w$ in $V_0\oplus V_1$
has sum equal to 165. Let $E_2$ be the projection onto $V_2$. We have
a constraint satisfaction problem of the form:
$w E_2 = 0$ and $w\ge 0$.
The MIP-solver \textsf{Gurobi} only returns feasible solutions with $w\cdot \allones=165$.
Therefore, $G$ is critically nonspreading.
\end{example}

It would be interesting to investigate the subfamily of critically nonspreading groups.

\begin{problem}
Which groups are critically nonspreading?
\end{problem}

As mentioned above, it is still open if there is a permutation group that is spreading but not \qi. Since every \qi-group is
$3/2$-transitive, perhaps a simpler problem is the following:

\begin{problem}
Is every spreading permutation group $3/2$-transitive? 
\end{problem}

Every spreading group of affine type is \qi, and so is $3/2$-transitive, and so a negative example is almost simple.
Also, $S_7$ acting on 2-subsets (i.e., 21 points) is $3/2$-transitive but is not \qi\ and not spreading \cite[p.58]{AraujoCameronSteinberg2017}.

\section*{Acknowledgements}

 This work forms part of an Australian Research Council Discovery Project DP200101951. Both authors are grateful for the support of a Strategic Research Grant of the Faculty of Engineering, University of Canterbury, and 
 the ad hoc financial support of Prof. Mark Reynolds (The University of Western Australia),
 which enabled them to finish this research.

\bibliographystyle{abbrv}
\bibliography{references}

\begin{thebibliography}{10}

\bibitem{ABC_strat}
P.~P. Alejandro, R.~A. Bailey, and P.~J. Cameron.
\newblock Association schemes and permutation groups.
\newblock {\em Discrete Math.}, 266(1-3):47--67, 2003.

\bibitem{AraujoCameronSteinberg2017}
J.~Ara\'{u}jo, P.~J. Cameron, and B.~Steinberg.
\newblock Between primitive and 2-transitive: synchronization and its friends.
\newblock {\em EMS Surv. Math. Sci.}, 4(2):101--184, 2017.

\bibitem{Bailey}
R.~A. Bailey.
\newblock Strata for randomized experiments. ({With} discussion).
\newblock {\em J. R. Stat. Soc., Ser. B}, 53(1):27--78, 1991.

\bibitem{withSaul}
J.~Bamberg, S.~D. Freedman, and M.~Giudici.
\newblock Spreading primitive groups of diagonal type do not exist.
\newblock {\em Proceedings of the Royal Society of Edinburgh: Section A
  Mathematics}, to appear.

\bibitem{BGLPS}
J.~Bamberg, M.~Giudici, M.~W. Liebeck, C.~E. Praeger, and J.~Saxl.
\newblock The classification of almost simple {{\(\frac 32\)}}-transitive
  groups.
\newblock {\em Trans. Am. Math. Soc.}, 365(8):4257--4311, 2013.

\bibitem{BambergLawPenttila}
J.~Bamberg, M.~Law, and T.~Penttila.
\newblock Tight sets and {$m$}-ovoids of generalised quadrangles.
\newblock {\em Combinatorica}, 29(1):1--17, 2009.

\bibitem{Brouwer_unpublished}
A.~E. Brouwer.
\newblock Coherent configurations.
\newblock unpublished.

\bibitem{CameronKazanidis}
P.~J. Cameron and P.~A. Kazanidis.
\newblock Cores of symmetric graphs.
\newblock {\em J. Aust. Math. Soc.}, 85:145--154, 2008.

\bibitem{Delsarte1977}
P.~Delsarte.
\newblock Pairs of vectors in the space of an association scheme.
\newblock {\em Philips Res. Rep.}, 32(5-6):373--411, 1977.

\bibitem{Dixon2005}
J.~D. Dixon.
\newblock Permutation representations and rational irreducibility.
\newblock {\em Bull. Aust. Math. Soc.}, 71:493--503, 2005.

\bibitem{GAP}
The GAP~Group.
\newblock {\em {GAP -- Groups, Algorithms, and Programming, Version 4.12.2}},
  2022.

\bibitem{Godsil}
C.~D. Godsil.
\newblock {\em Algebraic combinatorics}.
\newblock New York: Chapman \& Hall, 1993.

\bibitem{gurobi}
{Gurobi Optimization Inc.}
\newblock Gurobi optimizer version 8.1.
\newblock \url{http://www.gurobi.com/}.

\bibitem{Higman1967}
D.~G. Higman.
\newblock Intersection matrices for finite permutation groups.
\newblock {\em J. Algebra}, 6:22--42, 1967.

\bibitem{Higman1970}
D.~G. Higman.
\newblock Coherent configurations. {I}.
\newblock {\em Rend. Semin. Mat. Univ. Padova}, 44:1--25, 1971.

\bibitem{Higman75}
D.~G. Higman.
\newblock Coherent configurations. {I}. {O}rdinary representation theory.
\newblock {\em Geom. Dedicata}, 4(1):1--32, 1975.

\bibitem{Hobart}
S.~Hobart.
\newblock Bounds on subsets of coherent configurations.
\newblock {\em Mich. Math. J.}, 58(1):231--239, 2009.

\bibitem{HobartWilliford2013}
S.~A. Hobart and J.~Williford.
\newblock The independence number for polarity graphs of even order planes.
\newblock {\em J. Algebr. Comb.}, 38(1):57--64, 2013.

\bibitem{HobartWilliford2014}
S.~A. Hobart and J.~Williford.
\newblock Tightness in subset bounds for coherent configurations.
\newblock {\em J. Algebr. Comb.}, 39(3):647--658, 2014.

\bibitem{PrimGrp}
A.~Hulpke, O.~Konovalov, C.~M. Roney-Dougal, and C.~Russell.
\newblock {PrimGrp}, gap primitive permutation groups library, {V}ersion 3.4.4.
\newblock \href{https://gap-packages.github.io/primgrp/}
  {\texttt{https://gap-packages.github.io/}\discretionary
  {}{}{}\texttt{primgrp/}}, Feb 2023.
\newblock GAP package.

\bibitem{witnesses}
J.~Lansdown and J.~Bamberg.
\newblock Witnesses for nonspreading primitive groups of almost simple type and
  degree at most 8191.
\newblock \href{https://doi.org/10.5281/zenodo.12698599}
  {\texttt{https://doi.org/10.5281/zenodo.12698599}}, July 2024.

\bibitem{Roos}
C.~Roos.
\newblock On antidesigns and designs in an association scheme.
\newblock {\em Delft Progr. Rep.}, 2(2):98--109, 1982.

\end{thebibliography}

\end{document}